\documentclass[noinfoline,11pt]{imsart} 


\usepackage{times}
\usepackage{amsthm}
\usepackage[authoryear]{natbib}

\usepackage[T1]{fontenc}
\usepackage{lmodern}
\usepackage{amssymb,amsmath}
\usepackage{ifxetex,ifluatex}
\usepackage{fixltx2e} 
\IfFileExists{microtype.sty}{\usepackage{microtype}}{}
\IfFileExists{upquote.sty}{\usepackage{upquote}}{}
\ifnum 0\ifxetex 1\fi\ifluatex 1\fi=0 
  \usepackage[utf8]{inputenc}

\usepackage{enumitem}

\usepackage{thmtools}
\usepackage{mathtools}

\usepackage{hyperref}
\usepackage{cleveref}
\usepackage{autonum}

\setcounter{secnumdepth}{3}

\numberwithin{equation}{section}

\DeclareMathOperator*{\argmin}{argmin}

\newtheorem{problem}{Problem}

\newcommand{\design}{\mathbb{X}}

\newcommand{\E}{\mathbb{E}}

\newcommand{\jstar}{{k}}
\newcommand{\that}{{\hat \theta}}
\newcommand{\sumj}{\sum_{j=1}^\M}

\newcommand{\RM}{\mathbf{R}^\M}
\newcommand{\Rn}{\mathbf{R}^n}
\newcommand{\R}{\mathbf{R}}
\newcommand{\Rp}{\mathbf{R}^p}
\newcommand{\simplex}{\Lambda^\M}

\newcommand{\K}[1]{\sumj #1_j \log \frac{1}{\pi_{\hat T_j}}}
\newcommand{\pen}[1]{\;\mathrm{pen}_Q( #1 )}


\newcommand{\vmu}{\boldsymbol{\mu}}

\newcommand{\ve}{\boldsymbol{e}}
\newcommand{\vv}{\boldsymbol{v}}
\newcommand{\vu}{\boldsymbol{u}}
\newcommand{\vtheta}{{\boldsymbol{\theta}}}
\newcommand{\vthat}{{\boldsymbol{\hat \theta}}}
\newcommand{\vthetaprime}{{\boldsymbol{\theta'}}}

\newcommand{\hmu}{\boldsymbol{\hat \mu}}
\newcommand{\vf}{\boldsymbol{f}}
\newcommand{\vbeta}{\boldsymbol{\beta}}
\newcommand{\hbeta}{\boldsymbol{\hat \beta}}

\newcommand{\vy}{\mathbf{y}}
\newcommand{\vxi}{{\boldsymbol{\xi}}}
\newcommand{\vdelta}{{\boldsymbol{\delta}}}
\newcommand{\vzero}{{\boldsymbol{0}}}
\newcommand{\supportsHn}{H_{\supports,\hat\sigma^2}}

\newcommand{\euclidnorm}[1]{\vert  #1 \vert_2}
\newcommand{\euclidnorms}[1]{\euclidnorm{ #1 }^2}
\newcommand{\scalednorm}[1]{\Vert  #1 \Vert}
\newcommand{\scalednorms}[1]{\scalednorm{ #1 }^2}

\newcommand{\zeronorm}[1]{| #1 |_0}

\newcommand{\onenorm}[1]{\left\vert  #1 \right\vert_1}

\newcommand{\vertiii}[1]{{\left\vert\kern-0.25ex\left\vert\kern-0.25ex\left\vert #1 
            \right\vert\kern-0.25ex\right\vert\kern-0.25ex\right\vert}}

\newcommand{\opnorm}[1]{\vertiii{ #1 }_{2}}

\newcommand{\hsnorm}[1]{\left\Vert #1 \right\Vert_{\mathrm{F}}}

\newcommand{\Tr}{\mathrm{Tr}}

\newcommand{\lasso}{\hat \vbeta^{\textsc{l}}}
\newcommand{\sqlasso}{\hat \vbeta^{\textsc{sq}}}

\newcommand{\supports}{\hat F}
\newcommand{\M}{{\hat M}}
\newcommand{\T}{{\hat T}}
\newcommand{\das}{{\hmu^{\textsc{q}}_{\supports,\hat\sigma^2}}}
\newcommand{\dassigma}{{\hmu^{\textsc{q}}_{\supports,\sigma^2}}}
\newcommand{\thetaprime}{{\theta'}}
\newcommand{\supp}{{\text{supp}}}

\newcommand{\Crit}{\text{Crit}_{\hat\sigma^2}}
\newcommand{\hmucrit}{\Pi_{\hat T_{\supports,\hat\sigma^2}}(\vy)}

\declaretheorem{theorem}
\declaretheorem{corollary}
\declaretheorem{definition}
\declaretheorem{lemma}


\begin{document}
\title{Aggregation of supports along the Lasso path
}
\runtitle{Aggregation of supports along the Lasso path
}
\author{Pierre C. Bellec}
\runauthor{Bellec}
\address{ENSAE-CREST}

\footnote{
    Accepted for presentation at Conference on
    Learning Theory (COLT) 2016.
}

\begin{abstract}
    In linear regression with fixed design,
    we propose two procedures that aggregate
    a data-driven collection of supports.
    The collection is a subset of the $2^p$ possible supports
    and both its cardinality and its elements can depend on the data.
    The procedures satisfy oracle inequalities with no assumption on the design matrix.
    Then we use these procedures to aggregate the supports that appear on the
    regularization path of the Lasso
    in order to construct an estimator that mimics
    the best Lasso estimator.
    If the restricted eigenvalue condition on the design matrix is satisfied,
    then this estimator achieves optimal prediction bounds.
    Finally, we discuss the computational cost of these procedures.
\end{abstract}

\maketitle

\section{Introduction}
Let $n,p$ be two positive integers.
We consider the mean estimation problem
\begin{equation}
    Y_i = \mu_i + \xi_i, \qquad i=1,...,n,
\end{equation}
where $\vmu =(\mu_1,...,\mu_n)^T \in\Rn$ is unknown,
$\vxi = (\xi_1,...,\xi_n)^T$ is a subgaussian vector, that is,
\begin{equation}
    \E[ \exp(\vv^T\vxi) ] \le \exp \frac{\sigma^2\euclidnorms{\vv}}{2}
    \qquad
    \text{ for all } \vv\in\R^n,
    \label{assum:noise}
\end{equation}
where $\sigma>0$ is the noise level
and $\euclidnorm{\cdot}$ is the Euclidean norm in $\R^n$.
We only observe $\vy = (Y_1,...,Y_n)^T$ and wish to estimate $\vmu$.
A design matrix $\design$ of size $n \times p$ is given and $p$ may be larger than $n$.
We do not require that the model is well-specified, i.e., that
there exists $\vbeta^*\in\Rp$
such that $\vmu = \design \vbeta^*$.
Our goal is to find an estimator $\hmu$
such that the prediction loss $\scalednorms{\hmu - \vmu}$ is small,
where $\scalednorms{\cdot}$ is the empirical loss defined by
\begin{equation}
    \scalednorms{\vu}
    =
    \frac 1 n 
    \euclidnorms{\vu}
    = \frac 1 n \sum_{i=1}^n u_i^2,
    \qquad
    \vu=(u_1,...,u_n)^T\in\R^n.
\end{equation}

In a high-dimensional setting where $p>n$,
the Lasso is known to achieve good prediction performance.
For any tuning parameter $\lambda > 0$,
define the Lasso estimate $\lasso_\lambda$ as any solution of the convex minimization problem
\begin{equation}
    \label{eq:def-lasso}
    \lasso_\lambda \in \argmin_{\vbeta\in\Rp} \frac{1}{2n} \euclidnorms{\vy - \design\vbeta} + \lambda \onenorm{\vbeta},
\end{equation}
where $\onenorm{\vbeta} = \sum_{j=1}^n |\beta_j|$ is the $\ell_1$-norm.
If $\design^T\design/n=I_{p\times p}$ where $I_{p\times p}$ is
the identity matrix of size $p$, then an optimal choice
of the tuning parameter is $\lambda_{univ} \sim \sigma\sqrt{\log(p)/n}$,
up to a numerical constant.
If the Restricted Eigenvalue condition holds (cf. \Cref{def:re} below),
then the universal tuning parameter $\lambda_{univ} \sim \sigma\sqrt{\log(p)/n}$
leads to good prediction performance \citep{bickel2009simultaneous}.
However, 
if the columns of $\design$ are correlated
and the Restricted Eigenvalue condition is not satisfied,
the question
of the optimal choice of the tuning parameter $\lambda$ is still unanswered,
even if the noise level $\sigma^2$ is known.
Empirical and theoretical studies \citep{van2013lasso,hebiri2013correlations,dalalyan2014prediction}
have shown that
if the columns of $\design$ are correlated,
the Lasso estimate with a tuning parameter substantially smaller than
the universal parameter leads to a prediction performance
which is substantially better than that of the Lasso estimate with the universal parameter.
To summarize, these papers raise the following question:

\begin{problem}[Data-driven selection of the tuning parameter]
    \label{problem:1}
    Find a data-driven quantity $\hat \lambda$ such that the prediction loss
    $\scalednorms{\vmu-\design\lasso_{\hat \lambda}}$ is small with high probability.
\end{problem}

In this paper, we focus on a different problem, namely:

\begin{problem}[Lasso Aggregation] 
    \label{problem:2}
    Construct an estimator $\hmu$ that mimics the prediction performance
    of the best Lasso estimator, that is, construct an estimator $\hmu$
    such that with high probability,
    \begin{equation}
        \scalednorms{\hmu - \vmu}
        \le 
        C \min_{\lambda > 0}
        \left(
            \;
            \scalednorms{\design \lasso_\lambda - \vmu }
            +
            \Delta(\lasso_\lambda)
            \;
        \right),
        \label{eq:soi-intro-lasso}
    \end{equation}
    where $C\ge 1$ is a constant and $\Delta(\lasso_\lambda)$ is a small quantity.
\end{problem}

\Cref{problem:1} and \Cref{problem:2} have the same goal,
that is, to achieve a small prediction loss with high probability.
In \Cref{problem:1}, the goal is to select a Lasso estimate that has
small prediction loss. In \Cref{problem:2},
we look for an estimator $\hmu$ such that the prediction performance of $\hmu$
is almost as good as the prediction performance of any Lasso estimate.
The estimator $\hmu$ may be of a different form
than $\lasso_{\hat\lambda}$ for some data-driven parameter $\hat \lambda$.

Our motivation to consider \Cref{problem:2} instead of \Cref{problem:1}
is the following.
Let $\vmu_1,...,\vmu_M$ be deterministic vectors $\R^n$.
If the goal is to mimic the best approximation of $\vmu$ among $\vmu_1,...,\vmu_M$,
it is well known in the literature on aggregation problems 
that an estimator of the form $\hmu = \vf_{\hat k}$ for some data-driven integer $\hat k$
is suboptimal
(cf. Theorem 2.1 in \cite{rigollet2012sparse},
Section 2 of \cite{juditsky2008learning}
and Proposition 6.1 in \cite{gerchinovitz2011prediction}).
Thus, an optimal procedure cannot be valued in the discrete set $\{\vmu_1,...,\vmu_M\}$.
Optimal procedures for this problem are valued in the convex hull of the set $\{\vmu_1,...,\vmu_M\}$.
Examples are the Exponential Weights procedures
proposed in \cite{leung2006information,dalalyan2012sharp}
or the Q-aggregation procedure of \cite{dai2014aggregation}.

Although a lot of progress has been made for various aggregation problems,
to our knowledge no previous work deals with the problem
of aggregation of nonlinear estimators such as
the collection $(\design \lasso_\lambda)_{\lambda > 0}$
based on the sample.
In the setting of the present paper,
the observation $\vy$ and the Lasso estimates are not independent:
no data-split is performed and the same data is used to construct the Lasso estimators
and to aggregate them.

We will show that aggregation of nonlinear estimators of the form $\design \hbeta$
is possible,
for any nonlinear estimators $\hbeta$
and without any assumption on $\design$.
For instance,
an estimator $\hmu$ that achieves \eqref{eq:soi-intro-lasso}
with 
\begin{equation}
    \Delta(\vbeta) \simeq \frac{\sigma^2 \zeronorm{\vbeta}}{n} \log\left( \frac{ep}{\zeronorm{\vbeta}\vee 1}\right)
\end{equation}
is given in \Cref{s:lasso}.
Here, $\zeronorm{\vbeta}$ denotes the number of nonzero coefficients of $\vbeta$ and $a\vee b=\max(a,b)$.

Given a design matrix $\design$,
we call \textit{support} any subset $T$ of $\{1,...,p\}$.
The cardinality of $T$ is denoted by $|T|$ and
for $\vbeta\in\Rp$,
$\supp(\vbeta)$ is the set of indices $k=1,...,p$ such that $\beta_k\ne 0$.
Given a support $T$, we denote by $\Pi_T$ the square matrix of size $n$
which is the orthogonal projection on the linear span of
the columns of $\design$ whose indices belong to $T$.
Denote by $\mathcal P(\{1,...,p\})$ the set of all subsets of $\{1,...,p\}$.
We will consider the following problem.

\begin{problem}[Aggregation of a data-driven collection of supports]
    \label{problem:supports}
    Let $\supports$ be a data-driven collection of supports,
    that is, an estimator valued in $\mathcal{P}(\{1,...,p\})$.
    Construct an estimator $\hmu$ such that with high probability,
    \begin{equation}
        \scalednorms{\hmu - \vmu}
        \le
        \min_{T\in\supports}
        \left(
            \;
            \scalednorms{\Pi_T\vmu  - \vmu}
            + \Delta(T)
            \;
        \right),
        \label{eq:soi-intro-supports}
    \end{equation}
    where $\Delta(\cdot)$ is a function that takes small values.
\end{problem}
The set $\supports$ is a family of supports.
Let us emphasize that both its cardinality and its elements can depend on the
data $\vy$.
Note that for any support $T$,
$\Pi_T \vmu = \design \vbeta^*_T$
where $\vbeta^*_T$ minimizes $\euclidnorms{\design \vbeta - \vmu}$
subject to $\beta_k=0$ for all $k\not\in T$.
In \Cref{s:lasso}, we construct an estimator
$\hmu$ that satisfies 
\eqref{eq:soi-intro-supports} with 
$\Delta(T) \simeq \sigma^2 |T| \log(p/|T|)/n$
for all nonempty supports $T$.
In the literature on aggregation problems, 
one is given a collection of estimators $\{\hmu_1,...,\hmu_M\}$
where $M\ge 1$ is a deterministic integer
and the goal is to mimic the best estimator in this collection,
cf. \cite{tsybakov2014aggregation} and the references therein.
A novelty of the present paper is to consider aggregation
of a collection of estimators, where the cardinality of the collection depends on the data.

The main contributions of the present paper are the following.
\begin{itemize}
    \item In \Cref{s:supports}, we propose an estimator $\das$
        that satisfies the oracle inequality \eqref{eq:soi-intro-supports}
        with
        $\Delta(T) \simeq \hat\sigma^2 |T| \log(p/|T|)/n$ for all nonempty supports $T$,
        where $\hat\sigma^2$ is an estimator of the noise level.
        This estimator solves \Cref{problem:supports}.
        We explain in \Cref{cor:nonlinear} how \Cref{s:supports}
        can be used to construct
        a procedure that aggregates nonlinear estimators of the form
        $\design\hbeta$.
    \item 
        \Cref{s:lasso} is devoted to \Cref{problem:2}.
        Using the result from \Cref{s:supports},
        we construct an estimator $\hmu$
        that satisfies \eqref{eq:soi-intro-lasso} with
        $\Delta(\vbeta) \simeq \sigma^2 \zeronorm{\vbeta} \log(p/\zeronorm{\vbeta})$.
        The computational complexity of the procedure
        is the sum of the complexity of the regularization path of the Lasso
        and the complexity of a convex quadratic program.
\end{itemize}
The proofs can be found in the appendix.

\section{Aggregation of a data-driven family of supports\label{s:supports}}

Throughout this section, let $\supports$ be a data-driven collection of supports
and let $\hat\sigma^2\ge0$ be a real valued estimator.
Let $\M$ be the cardinality of $\supports$, and let $(\hat T_j)_{j=1,...,\M}$
be supports such that
\begin{equation}
    \supports = \{
        \T_1,...,\T_\M 
    \}.
    \label{eq:explicit-supports}
\end{equation}

For all supports $T \subset \{1,...,p\}$, 
define the weights
\cite[]{rigollet2012sparse}
\begin{equation}
    \label{eq:weights}
    \pi_T \coloneqq \left(H_p {p \choose |T|} e^{|T|} \right)^{-1},
    \qquad
    H_p \coloneqq \frac{e-e^{-p}}{e-1}.
\end{equation}
Note that by construction, 
the constant $H_p$ is greater than $1$ and
$\sum_{T\in\mathcal{P}(\{1,...,p\})} \pi_T = 1$ where $\mathcal P(\{1,...,p\})$ is the set of all subsets of $\{1,...,p\}$.
Given a support $T$,
the Least Squares estimator on the linear span of the covariates indexed by $T$
is $\Pi_T \vy$.

We will consider two estimators of $\vmu$ based on $\hat F$ and $\hat\sigma^2$.
The first estimator is defined as follows. Define the criterion
\begin{equation}
    \Crit (T) = \euclidnorms{\vy - \Pi_T\vy} + 18 \hat\sigma^2 \log \frac 1 {\pi_T}.
\end{equation}
We have
\begin{equation}
    \label{eq:weights-log} 
    |T| \le \log \frac 1 {\pi_T} \le \frac 1 2 + 2|T| \log(ep/|T|)
\end{equation}
for any support $T$.
The lower bound is a direct consequence of $H_p>1$
and the upper bound is proved in \cite[(5.4)]{rigollet2012sparse}.
As \eqref{eq:weights-log} holds,
the above criterion is of the same nature as
$C_p$, AIC, BIC and their variants, cf. 
\cite{birge2001gaussian}.
Define the estimator
\begin{equation}
    \label{eq:def-hmucrit}
    \hmucrit
    \qquad
    \text{ where }
    \qquad
    \hat T_{\hat F, \hat\sigma^2} \in \argmin_{ T \in \hat F } \Crit(T).
\end{equation}
The estimator \eqref{eq:def-hmucrit} is the orthogonal projection
of $\vy$ onto the linear span of the columns of $\design$ whose indices are in $\hat T_{\hat F,\hat\sigma^2}$.
If $\hat F$ is not data-dependent, the procedure $\hmucrit$
is close to the one studied in \cite{birge2001gaussian}.

We now define a second estimator valued in the convex hull
of $(\Pi_T \vy)_{T\in \hat F}$.
Let $\M$ be the cardinality of $\supports$, and let  $(\hat T_j)_{j=1,...,\M}$ be supports such that \eqref{eq:explicit-supports} holds.
For any $j=1,...,\M$, let $\hmu_j= \Pi_{\T_j} \vy$.
Define a simplex in $\RM$ as follows:
\begin{equation}
    \simplex = \Big\{
        \vtheta \in \RM, 
        \;\;\; \sumj \theta_j = 1,
        \;\;\; \forall j=1\dots \M, \;\; \theta_j \ge 0
    \Big\}.
\end{equation}
For any $\vtheta\in\RM$, define $\hmu_\vtheta = \sum_{j=1}^\M \theta_j \hmu_j$.
For all $\vtheta\in\simplex$, let
\begin{equation}
    \label{eq:def-H}
    \supportsHn(\vtheta) \coloneqq \euclidnorms{\hmu_\vtheta - \vy} + \frac{1}{2} \pen{\vtheta} + 26 \hat \sigma^2 \K{\theta}.
\end{equation}
where
\begin{equation}
    \label{eq:def-pen-supports}
    \pen{\vtheta} \coloneqq \sum_{j=1}^\M \theta_j \euclidnorms{\hmu_j - \hmu_\vtheta}.
\end{equation}
The penalty \eqref{eq:def-pen-supports} is inspired by recent works on the $Q$-aggregation procedure \cite[]{dai2012deviation}, 
and it was used to derive sharp oracle inequalities
for aggregation of linear estimators \cite[]{dai2014aggregation,bellec2014affine}
and density estimators \cite[]{bellec2014optimal}.
The penalty pushes $\hmu_\vtheta$ 
towards the points $\{\hmu_1,...,\hmu_\M\}$.
Finally, the term $\K{\theta}$ is another penalty that pushes 
the coordinate $\theta_j$ to $0$
if the size of the support $\T_j$ is large.

Define the estimator $\das$ as any minimizer of the function $\supportsHn$ defined
in \eqref{eq:def-H}:
\begin{equation}
    \label{eq:def-das}
    \das \coloneqq \hmu_\vthat, \qquad \vthat \in \argmin_{\vtheta\in\simplex} \supportsHn(\vtheta).
\end{equation}

\begin{theorem}
    \label{thm:supports}
    Let $n,p$ be positive integers and let $\sigma>0$.
    Let $\vmu\in\Rn$ and $\design$ be any matrix of size $n\times p$.
    Let $\supports$ be any data-driven collection of subsets of
    $\{1,...,p\}$.
    Assume that the noise $\vxi$ satisfies \eqref{assum:noise}.
    Let $\hat \sigma^2$ be any real valued estimator
    and let $\delta \coloneqq \mathbb P(\hat\sigma^2 < \sigma^2)$.
    Then for all $x>0$, 
    the estimator $\das$ defined in \eqref{eq:def-das} satisfies
    with probability greater than $1-\delta - 2\exp(-x)$,
    \begin{equation}
        \label{eq:soi-supports}
        \scalednorms{\das - \vmu}
        \le
        \min_{T \in \supports}
        \left(
            \scalednorms{\Pi_{T}\vmu  - \vmu}
            +\frac{\hat\sigma^2}{n}
            \left(
                    24
                + 96 |T| \log\left(
                    \frac{ep}{|T| \vee 1}
                \right)
                \right)
        \right)
        + \frac{22 \sigma^2 x}{n}.
    \end{equation}
    Furthermore, the estimator $\hmucrit$ satisfies
    with probability greater than $1-\delta - 2\exp(-x)$,
    \begin{equation}
        \label{eq:oi-supports}
        \scalednorms{\hmucrit - \vmu}
        \le
        \min_{T \in \supports}
        \left(
            3 \scalednorms{\Pi_{T}\vmu  - \vmu}
            + \frac{\hat\sigma^2}{n}\left(
                    26
                + 104 |T| \log\left(
                    \frac{ep}{|T| \vee 1}
                \right)
                \right)
        \right)
        + \frac{28 \sigma^2 x}{n}.
    \end{equation}
\end{theorem}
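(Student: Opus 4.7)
The strategy is the standard $Q$-aggregation scheme combined with a weighted union bound indexed by the full power set of $\{1,\ldots,p\}$, which lets the random collection $\supports$ enter ``for free''. Starting from $\supportsHn(\vthat)\le\supportsHn(\ve_j)$ at each canonical vector $\ve_j\in\simplex$ (so that $\pen{\ve_j}=0$) and applying the identity $\euclidnorms{\hmu-\vy}-\euclidnorms{\vxi}=\euclidnorms{\hmu-\vmu}-2\langle\hmu-\vmu,\vxi\rangle$ on both sides, I arrive at
\begin{equation*}
\euclidnorms{\das-\vmu}\le\euclidnorms{\hmu_j-\vmu}+2\langle\das-\hmu_j,\vxi\rangle-\tfrac12\pen{\vthat}+26\hat\sigma^2\bigl(\log(1/\pi_{\T_j})-\K{\hat\theta}\bigr)
\end{equation*}
for every $j=1,\ldots,\M$. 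Since $\hmu_k=\Pi_{\T_k}(\vmu+\vxi)$, Pythagoras gives $\euclidnorms{\hmu_j-\vmu}=\euclidnorms{\Pi_{\T_j}\vmu-\vmu}+\euclidnorms{\Pi_{\T_j}\vxi}$, and the cross term splits into a linear-in-$\vxi$ piece $2\sum_k\hat\theta_k\langle\Pi_{\T_k}\vmu-\Pi_{\T_j}\vmu,\vxi\rangle$ and a quadratic-in-$\vxi$ piece $2\sum_k\hat\theta_k\euclidnorms{\Pi_{\T_k}\vxi}-2\euclidnorms{\Pi_{\T_j}\vxi}$.

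\textbf{Uniform deviation bounds.} Two uniform deviation bounds indexed by the full power set and paid for by the weights $\pi_T$ handle these stochastic terms. A Hanson--Wright-type inequality for each rank-$|T|$ projection, combined with the union bound $\sum_T\pi_T e^{-x_T}\le e^{-x}$ using $x_T=\log(1/\pi_T)+x$, gives $\euclidnorms{\Pi_T\vxi}\le C_1\sigma^2(\log(1/\pi_T)+x)$ for every $T$ simultaneously with probability $\ge 1-e^{-x}$; the absorption $|T|\le\log(1/\pi_T)$ from \eqref{eq:weights-log} is essential here. A second pairwise weighted union bound exploits that $\langle\Pi_T\vmu-\Pi_{T'}\vmu,\vxi\rangle$ is sub-Gaussian with variance proxy $\sigma^2\euclidnorms{\Pi_T\vmu-\Pi_{T'}\vmu}$, and yields $|\langle\Pi_T\vmu-\Pi_{T'}\vmu,\vxi\rangle|\le \sigma\euclidnorm{\Pi_T\vmu-\Pi_{T'}\vmu}\sqrt{2(\log(1/\pi_T)+\log(1/\pi_{T'})+x)}$ uniformly in $(T,T')$. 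On the event $\{\hat\sigma^2\ge\sigma^2\}$ (probability $\ge 1-\delta$), the quadratic contribution is absorbed into the negative reward $-26\hat\sigma^2\K{\hat\theta}$ with the constant $26$ chosen to exceed $2C_1$, and Young's inequality $2ab\le\epsilon a^2+b^2/\epsilon$ handles the linear contribution: the squared factor $\euclidnorms{\Pi_{\T_k}\vmu-\Pi_{\T_j}\vmu}$ is absorbed either by $\tfrac12\pen{\vthat}$ (via the identity $\sum_k\hat\theta_k\euclidnorms{\hmu_k-\hmu_j}=\pen{\vthat}+\euclidnorms{\das-\hmu_j}$ together with $\euclidnorms{\Pi_{\T_k}\vmu-\Pi_{\T_j}\vmu}\le 2\euclidnorms{\hmu_k-\hmu_j}+4\euclidnorms{\Pi_{\T_k}\vxi}+4\euclidnorms{\Pi_{\T_j}\vxi}$) or by the oracle's approximation error $\euclidnorms{\Pi_{\T_j}\vmu-\vmu}$. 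Replacing $\log(1/\pi_T)$ by its upper bound $\tfrac12+2|T|\log(ep/|T|)$ from \eqref{eq:weights-log} then produces the right-hand side of \eqref{eq:soi-supports}.

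\textbf{Inequality for $\hmucrit$ and the main obstacle.} This part follows the same template but without the penalty $\pen{\vthat}$ to absorb cross-terms, and is therefore shorter. From $\Crit(\T_{\supports,\hat\sigma^2})\le\Crit(T^*)$, expanding each squared residual as $\euclidnorms{\vy-\Pi_T\vy}=\euclidnorms{(I-\Pi_T)\vmu}+2\langle(I-\Pi_T)\vmu,\vxi\rangle+\euclidnorms{\vxi}-\euclidnorms{\Pi_T\vxi}$ and adding $\euclidnorms{\Pi_{\T_{\supports,\hat\sigma^2}}\vxi}$ to both sides (to recover $\euclidnorms{\hmucrit-\vmu}$ on the left by Pythagoras) yields a basic inequality; the same two uniform deviation bounds as above control the stochastic terms, but the Young step now forces a loss factor $3$ via the bound $\euclidnorms{\Pi_{\hat T}\vmu-\Pi_{T^*}\vmu}\le 2\euclidnorms{(I-\Pi_{\hat T})\vmu}+2\euclidnorms{(I-\Pi_{T^*})\vmu}$ and the tuning $(1+2\epsilon)/(1-2\epsilon)=3$, i.e.\ $\epsilon=1/4$. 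The main obstacle in both parts is the uniform concentration of the quadratic forms $\euclidnorms{\Pi_T\vxi}$ over the exponential-size collection $\mathcal P(\{1,\ldots,p\})$: weighting each $T$ by $\pi_T$ is what makes the union bound tractable and, in particular, is what allows $\supports$ to depend on the data.
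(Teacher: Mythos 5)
Your overall architecture — the weighted union bound over the full power set indexed by the weights $\pi_T$, the bound $|T|\le\log(1/\pi_T)$ to absorb the projection dimensions, the Hanson--Wright control of the quadratic forms $\euclidnorms{\Pi_T\vxi}$, and Young's inequality on the linear cross terms — matches the paper's strategy, and the sketch for $\hmucrit$ follows the same model-selection template as the paper's proof of \eqref{eq:oi-supports}. However, the $Q$-aggregation part has a genuine gap at the very first step.

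\textbf{The gap.} You start from $\supportsHn(\vthat)\le\supportsHn(\ve_j)$. For a smooth convex function this is \emph{strictly weaker} than the first-order optimality condition $\nabla\supportsHn(\vthat)^T(\ve_j-\vthat)\ge0$; since $\supportsHn$ is quadratic in $\vtheta$, the exact gap between the two is
\begin{equation}
\supportsHn(\ve_j)-\supportsHn(\vthat)-\nabla\supportsHn(\vthat)^T(\ve_j-\vthat)=\tfrac12(\ve_j-\vthat)^T\nabla^2\supportsHn\,(\ve_j-\vthat)=\tfrac12\euclidnorms{\hmu_j-\das}.
\end{equation}
The paper (Lemma~\ref{lemma:technical-as}) uses the first-order condition, which places $-\tfrac12\sum_k\hat\theta_k\euclidnorms{\hmu_k-\hmu_j}$ on the right-hand side of the basic inequality; your starting point only yields $-\tfrac12\pen{\vthat}=-\tfrac12\sum_k\hat\theta_k\euclidnorms{\hmu_k-\das}$, i.e.\ it is short by precisely $-\tfrac12\euclidnorms{\das-\hmu_j}$ via the identity $\sum_k\hat\theta_k\euclidnorms{\hmu_k-\hmu_j}=\pen{\vthat}+\euclidnorms{\das-\hmu_j}$ (an identity you yourself invoke). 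This is not cosmetic: when you apply Young to the linear term $2\sum_k\hat\theta_k\langle\Pi_{\T_k}\vmu-\Pi_{\T_j}\vmu,\vxi\rangle$ and then unfold $\euclidnorms{\Pi_{\T_k}\vmu-\Pi_{\T_j}\vmu}\le 2\euclidnorms{\hmu_k-\hmu_j}+4\euclidnorms{\Pi_{\T_k}\vxi}+4\euclidnorms{\Pi_{\T_j}\vxi}$, summing in $k$ produces $\epsilon(2\pen{\vthat}+2\euclidnorms{\das-\hmu_j}+\ldots)$. The $\pen{\vthat}$ piece is absorbed by $-\tfrac12\pen{\vthat}$ for small $\epsilon$, the quadratic noise pieces by the Hanson--Wright budget, but $2\epsilon\euclidnorms{\das-\hmu_j}$ has nowhere to go: bounding it by $4\epsilon\euclidnorms{\das-\vmu}+4\epsilon\euclidnorms{\hmu_j-\vmu}$ and moving the first piece to the left changes the leading constant to $(1+4\epsilon)/(1-4\epsilon)>1$, destroying the sharp constant that \eqref{eq:soi-supports} claims. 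The fix is to replace your starting inequality by the stationarity condition $\nabla\supportsHn(\vthat)^T(\ve_j-\vthat)\ge0$ (which holds because $\vthat$ minimizes a convex function over the simplex and $\ve_j$ is feasible); the extra $-\tfrac12\euclidnorms{\das-\hmu_j}$ this buys is exactly enough to absorb the stray term. This is the core trick of $Q$-aggregation, and the paper isolates it in Lemma~\ref{lemma:technical-as}, after which the stochastic part of the proof collapses to controlling the quantity $Z(S,T)=2\vxi^T(\Pi_S\vy-\Pi_T\vmu)-\tfrac12\euclidnorms{\Pi_S\vy-\Pi_T\vy}$ uniformly over $S,T$ via the weighted union bound, which is the part you describe correctly.
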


In previously studied aggregation problems, 
one is given a collection of estimators $\{\hmu_1,...,\hmu_M\}$
where $M\ge 1$ is a deterministic integer
and the goal is to construct an estimator $\hmu$ such that with high probability,
\begin{equation}
    \scalednorms{\hmu - \vmu}
    \le 
    \min_{j=1,...,M}
    \scalednorms{\hmu_j - \vmu}
    + \Delta_n(M),
\end{equation}
where $\Delta_n(M)$ is a small error term that increases with $M$, 
cf. \cite{tsybakov2014aggregation} and the references therein.
\Cref{thm:supports} is of a different nature for several reasons.
First, the set $\hat F$ is random,
its cardinality can depend on the observed data $\vy$.
Second, the error term that appears
inside the minimum of \eqref{eq:soi-supports} does not depend on the cardinality of $\hat F$.

The estimator $\das$ of \Cref{thm:supports}
with 
$\hat\sigma^2 = \sigma^2$
and $\hat F$ being the set of all subsets of $\{1,...,p\}$
was previously studied as the Exponential Screening estimator \citep{rigollet2011exponential}
or as the Sparsity Pattern Aggregate \citep{rigollet2012sparse}.
In this special case,
$\hat F$
is deterministic and contains all the $2^p$ possible supports.
Because of this exponential number of supports, computing the sparsity pattern
aggregate in practice is hard.
An MCMC algorithm is developed in \cite{rigollet2011exponential}
to compute an approximate solution of the sparsity pattern aggregate,
but to our knowledge there is no theoretical guarantee
that this MCMC algorithm will converge to a good approximation in polynomial time.
The Sparsity Pattern Aggregate 
satisfies \eqref{eq:soi-supports}
with $\hat\sigma^2 = \sigma^2$ and $\hat F =\mathcal P(\{1,...,p\})$.
This sharp oracle inequality yields the minimax rate over all $\ell_q$
balls for all $0<q\le1$,
under no assumption on the design matrix $\design$ \citep{dai2014aggregation,tsybakov2014aggregation}.

To construct the estimator $\das$,
one has to solve the optimization problem \eqref{eq:def-das}.
This is a convex quadratic program of size $|\hat F|$
with a simplex constraint.
The complexity of computing $\das$ is polynomial in the cardinality of $\hat F$.
Thus, if $\hat F$ is small then it is possible to construct $\das$ efficiently.

As the cardinality of $\hat F$ decreases,
the prediction performance of the estimator $\das$
becomes worse, but computing $\das$ becomes easier.

\begin{problem}
    Construct a data-driven set of supports $\hat F$ 
    such that with high probability, there exists a support $T\in\hat F$
    for which, simultaneously,
    the bias $\scalednorms{\Pi_T\vmu -\vmu}$
    and the size $|T|$ are small.
\end{problem}
If we can construct such a set $\hat F$, by \eqref{eq:soi-supports}
the prediction loss of the estimator $\das$ will be small.
Note that \Cref{thm:supports} needs no assumption on the data-driven set $\hat F$
and the design matrix $\design$.

In the following Corollary, we perform aggregation
of a family of nonlinear estimators of the form $(\design\hbeta_k)_{j\in J}$
for some set $J$.
All estimators in the family share the same design matrix $\design$
and this matrix is deterministic.

\begin{corollary}
    \label{cor:nonlinear}
    Let $n,p$ be positive integers and let $\sigma>0$.
    Let $\vmu\in\Rn$ and $\design$ be any matrix of size $n\times p$.
    Let $\supports$ be any data-driven collection of subsets of $\{1,...,p\}$.
    Assume that the noise $\vxi$ satisfies \eqref{assum:noise}.
    Let $(\hbeta_j)_{j\in\hat J}$ be a 
    family of estimators valued in $\Rp$.
    Both the cardinality of the family and its elements can depend on the
    data.
    Let $\hat \sigma^2$ be any real valued estimator
    and let $\delta \coloneqq \mathbb P(\hat\sigma^2 < \sigma^2)$.
    Define $\supports = \{ \supp(\hbeta_j), j\in\hat J\}$
    and let $\das$ be the estimator \eqref{eq:def-das}.
    Then for all $x>0$, 
    the estimator $\das$
    satisfies
    with probability greater than $1-\delta - 2\exp(-x)$,
    \begin{equation}
        \label{eq:soi-nonlinear}
        \scalednorms{\das - \vmu}
        \le
        \min_{j \in \hat J}
        \left(
            \scalednorms{\design\hbeta_j  - \vmu}
            +\frac{\hat\sigma^2}{n}
            \left(
                    24
                    + 96 \zeronorm{\hbeta_j} \log\left(
                    \frac{ep}{\zeronorm{\hbeta_j} \vee 1}
                \right)
                \right)
        \right)
        + \frac{22 \sigma^2 x}{n}.
    \end{equation}
\end{corollary}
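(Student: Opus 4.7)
The plan is to deduce the corollary directly from Theorem \ref{thm:supports} applied to the data-driven collection $\supports = \{\supp(\hbeta_j) : j\in\hat J\}$. Since both the cardinality of $\hat J$ and the estimators $\hbeta_j$ may depend on the data, $\supports$ is indeed a data-driven collection of subsets of $\{1,\dots,p\}$, so Theorem \ref{thm:supports} applies and yields \eqref{eq:soi-supports} on an event of probability at least $1-\delta-2\exp(-x)$.

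On that event, I would then argue that each term in the minimum on the right-hand side of \eqref{eq:soi-nonlinear} dominates a corresponding term in the minimum in \eqref{eq:soi-supports}. Fix $j\in\hat J$ and set $T_j \coloneqq \supp(\hbeta_j) \in \supports$. Because $\hbeta_j$ is supported on $T_j$, the vector $\design\hbeta_j$ lies in the linear span of the columns of $\design$ indexed by $T_j$; since $\Pi_{T_j}$ is the orthogonal projection onto this subspace, $\Pi_{T_j}\vmu$ is the closest point to $\vmu$ in that subspace for the Euclidean norm, so
\begin{equation*}
    \scalednorms{\Pi_{T_j}\vmu - \vmu} \le \scalednorms{\design\hbeta_j - \vmu}.
\end{equation*}
In addition, $|T_j| = \zeronorm{\hbeta_j}$ by the definition of the support. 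Substituting these two inequalities into the $j$-th summand on the right-hand side of \eqref{eq:soi-supports} gives exactly the $j$-th summand on the right-hand side of \eqref{eq:soi-nonlinear}.

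Taking the minimum over $j\in\hat J$ (which is at least as large as the minimum over all $T\in\supports$, since $\{T_j : j\in\hat J\} \subseteq \supports$) yields \eqref{eq:soi-nonlinear}. There is essentially no obstacle here: the corollary is a deterministic consequence of \Cref{thm:supports} once one recognizes that the penalty in \eqref{eq:soi-supports} depends on $T$ only through $|T|$ and that projection onto the span of a support can only decrease the prediction error relative to any vector whose support is contained in that set. The only mild subtlety is that distinct indices $j$ may give rise to the same support, so $|\supports|$ may be strictly smaller than $|\hat J|$, but this is harmless for the argument.
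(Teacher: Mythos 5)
Your proof is correct and takes exactly the route the paper has in mind (the paper omits a separate proof precisely because the corollary is an immediate consequence of Theorem~\ref{thm:supports}). The two reductions you identify --- that $\design\hbeta_j$ lies in the column span indexed by $T_j=\supp(\hbeta_j)$, so $\scalednorms{\Pi_{T_j}\vmu-\vmu}\le\scalednorms{\design\hbeta_j-\vmu}$ by optimality of the orthogonal projection, and that $|T_j|=\zeronorm{\hbeta_j}$ --- are exactly what is needed.
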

Using \eqref{eq:oi-supports},
a similar result can be readily obtained for the estimator $\hmucrit$ with the leading constant $3$.

\section{Aggregation of supports along the Lasso path\label{s:lasso}}

Let us recall some properties of the Lasso path \citep{efron2004least}.
For a given observation $\vy$,
there exists a positive integer  $K$ and a finite sequence
\begin{equation}
    \lambda_0 > \lambda_1 > ... > \lambda_{K} = 0
\end{equation}
such that $\lasso_{\lambda} = \vzero$ for all $\lambda > \lambda_0$,
and such that
\begin{equation}
    \forall \lambda\in(\lambda_{k+1},\lambda_{k}),
    \qquad
    \supp(\lasso_{\lambda}) = \supp(\lasso_{\lambda_k}).
\end{equation}
Thus, there is a finite number of supports on the Lasso path.
In this section, we study the estimator of \Cref{thm:supports}
in the special case $\hat F =\{\supp(\lasso_{\lambda_k}), k=0,...,K\}$,
that is, we aggregate all the supports that appear on the Lasso path.

\begin{theorem}
    \label{thm:path}
    Let $n,p$ be positive integers and let $\sigma>0$.
    Let $\vmu\in\Rn$ and $\design$ be any matrix of size $n\times p$.
    Assume that the noise $\vxi$ satisfies \eqref{assum:noise}.
    Let $\hat \sigma^2$ be any real valued estimator
    and let $\delta \coloneqq \mathbb P(\hat\sigma^2 < \sigma^2)$.
    Let $\lambda_0 >...>\lambda_K$
    be the knots of the Lasso path.
    Let $\supports = \{ \supp(\lasso_{\lambda_j}),  j=0,...,K\}$
    be the family of all supports that appear on the Lasso path
    and let $\das$ be the estimator \eqref{eq:def-das}.
    Then for all $x>0$, 
    the estimator $\das$
    satisfies
    with probability greater than $1-\delta - 2\exp(-x)$,
    \begin{equation}
        \label{eq:soi-path}
        \scalednorms{\das - \vmu}
        \le
        \min_{\lambda>0}
        \left(
            \scalednorms{\design\lasso_\lambda  - \vmu}
            + \frac{\hat \sigma^2}{n}
            \left(
                24
                + 96 \zeronorm{\lasso_\lambda} \log\left(
                    \frac{ep}{\zeronorm{\lasso_\lambda} \vee 1}
                \right)
            \right)
        \right)
        + \frac{22 \sigma^2 x}{n},
    \end{equation}
    where for all $\lambda>0$, $\lasso_\lambda$ is the Lasso estimator \eqref{eq:def-lasso}.
\end{theorem}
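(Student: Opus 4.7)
The plan is to derive Theorem \ref{thm:path} as a direct consequence of Theorem \ref{thm:supports} applied to the specific data-driven collection $\supports = \{\supp(\lasso_{\lambda_j}), j=0,\ldots,K\}$. First I would invoke Theorem \ref{thm:supports} for this $\supports$, which on the event of probability at least $1 - \delta - 2e^{-x}$ yields
\begin{equation*}
    \scalednorms{\das - \vmu}
    \le
    \min_{T \in \supports}
    \left(
        \scalednorms{\Pi_T \vmu - \vmu}
        + \frac{\hat\sigma^2}{n}\left(24 + 96|T|\log\frac{ep}{|T|\vee 1}\right)
    \right)
    + \frac{22 \sigma^2 x}{n}.
\end{equation*}

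Next I would show that for every $\lambda > 0$, the $\lambda$-indexed term on the right-hand side of \eqref{eq:soi-path} dominates the $T$-indexed term obtained by setting $T = \supp(\lasso_\lambda)$. This relies on two elementary observations. First, $\supp(\lasso_\lambda) \in \supports$ for every $\lambda > 0$: by the piecewise constancy of the Lasso path recalled before the theorem, for $\lambda \in (\lambda_{k+1},\lambda_k)$ we have $\supp(\lasso_\lambda) = \supp(\lasso_{\lambda_k})$, while for $\lambda > \lambda_0$ we have $\lasso_\lambda = \vzero$ whose empty support also appears in $\supports$ (e.g.\ as $\supp(\lasso_{\lambda_0})$ or as a limit along the path). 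Second, since $\Pi_T$ is the orthogonal projector onto the linear span of the columns of $\design$ indexed by $T$, it minimises $\euclidnorms{\design \vbeta - \vmu}$ among $\vbeta \in \Rp$ supported on $T$; applying this with $T = \supp(\lasso_\lambda)$ and $\vbeta = \lasso_\lambda$ gives
\begin{equation*}
    \scalednorms{\Pi_{\supp(\lasso_\lambda)}\vmu - \vmu}
    \;\le\;
    \scalednorms{\design \lasso_\lambda - \vmu}.
\end{equation*}
Combined with the identity $|\supp(\lasso_\lambda)| = \zeronorm{\lasso_\lambda}$ and the fact that $t \mapsto t \log(ep/t)$ is increasing on $[1,p]$ in the relevant range, replacing $T$ by $\supp(\lasso_\lambda)$ in the minimum from Theorem \ref{thm:supports} and then taking the minimum over $\lambda > 0$ yields exactly \eqref{eq:soi-path}.

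There is no real obstacle: the argument is a packaging of Theorem \ref{thm:supports} with two essentially tautological facts about the Lasso path. The only point requiring a brief word of care is the possible non-uniqueness of $\lasso_\lambda$ at the knots $\lambda = \lambda_j$, but since the theorem quantifies over any selection of minimiser $\lasso_\lambda$ and the inequality $\scalednorms{\Pi_T \vmu - \vmu} \le \scalednorms{\design \lasso_\lambda - \vmu}$ holds for \emph{any} such selection, this causes no issue.
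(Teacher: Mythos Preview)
Your proposal is correct and matches the paper's (implicit) approach: the paper does not spell out a proof of Theorem~\ref{thm:path} but treats it, together with Corollary~\ref{cor:nonlinear}, as an immediate consequence of Theorem~\ref{thm:supports} via precisely the two observations you give --- that $\supp(\lasso_\lambda)\in\supports$ for every $\lambda>0$ by the piecewise-constant structure of the Lasso path, and that $\scalednorms{\Pi_T\vmu-\vmu}\le\scalednorms{\design\vbeta-\vmu}$ whenever $\supp(\vbeta)\subseteq T$. One small remark: the monotonicity of $t\mapsto t\log(ep/t)$ is unnecessary here, since $|\supp(\lasso_\lambda)|=\zeronorm{\lasso_\lambda}$ exactly and the penalty term is therefore identical, not merely bounded.
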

Using \eqref{eq:oi-supports},
a similar result can be readily obtained for the estimator $\hmucrit$ with the leading constant $3$.

The
computational complexity of the procedure of \Cref{thm:path}
is polynomial in the number of knots of the Lasso path.
This will be further discussed in \Cref{s:complexity-path}.
In the rest of this section, we assume that $\hat\sigma^2 = \sigma^2$ and $\delta=0$.
We will come back to the estimation of the noise level in \Cref{s:square-root} below.

Interestingly, \Cref{thm:path} does not need any assumption on the design matrix $\design$.
The estimators $\das$ and $\hmucrit$ have a good performance 
as soon as for some possibly unknown $\lambda>0$,
both 
the support of $\lasso_\lambda$
and the loss
$
\scalednorms{\design\lasso_\lambda  - \vmu}
$
are small.

\subsection{Prediction guarantees under the restricted eigenvalue condition}

The goal of this section is to study the prediction 
performance of the procedure defined in \Cref{thm:path} under
the Restricted Eigenvalue condition on the design matrix $\design$.

\begin{definition}
    \label{def:re}
    For any $s\in\{1,...,p\}$ and $c_0>0$, 
    condition $RE(s,c_0)$ is satisfied if 
    \begin{equation}
        \kappa(s,c_0)
        \coloneqq
        \min_{T\subset\{1,...,p\}: |T|\le s}
        \min_{\vdelta\in\R^p: \onenorm{\vdelta_{T^c}} \le c_0 \onenorm{\vdelta_{T}}}
        \frac{\euclidnorm{\design\vdelta}}{\sqrt n\euclidnorm{\vdelta_T}}
        >
        0.
    \end{equation}
\end{definition}

The following result is a reformulation of \citet[Theorem 6.2]{bickel2009simultaneous}.

\begin{theorem}[\citet{bickel2009simultaneous}]
    \label{thm:brt}
    Let $\design$ be such that the diagonal elements of $\design^T\design/n$ are all equal to 1.
    Assume that $\vmu=\design \vbeta^*$ and let $s \coloneqq \zeronorm{\vbeta^*}$.
    Assume that $\vxi\sim\mathcal N(\vzero,\sigma^2 I_{n\times n})$
    and that condition $RE(s,3)$ is satisfied.
    Let $x_0>0$.
    There is an event $\Omega(x_0)$ of 
    probability greater than $1-e^{-x_0}$
    on which the Lasso estimator \eqref{eq:def-lasso}
    with tuning parameter $\lambda_{x_0} = \sigma \sqrt{8(x_0+\log p)/n}$ satisfies
    simultaneously
    \begin{align}
        \zeronorm{\lasso_{\lambda_{x_0}}} &\le \frac{64\phi_{max}}{\kappa^2(s,3)} s,
        \label{eq:brt-ell0}
        \\
        \scalednorms{\design(\lasso_{\lambda_{x_0}}-\vbeta^*)}
        &\le \frac{128\sigma^2 s (x_0+\log p)}{\kappa^2(s,3) n},
        \label{eq:brt-prediction}
    \end{align}
    where $\phi_{max}$ is the largest eigenvalue 
    of the matrix $\design^T\design/n$.
\end{theorem}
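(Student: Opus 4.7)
The plan is to condition on a standard high-probability noise event and then combine two classical ingredients: the Bickel-Ritov-Tsybakov cone/RE argument for the prediction bound \eqref{eq:brt-prediction}, and a KKT-based argument for the sparsity bound \eqref{eq:brt-ell0}. Both bounds use the same event, so a single union bound gives the stated confidence.

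First I would define the good event $\Omega(x_0) = \{ 2\inftynorm{\design^T\vxi}/n \le \lambda_{x_0}\}$. Under the normalization $\|\design_j\|_2^2 = n$ and Gaussian noise, each coordinate $(\design^T\vxi)_j/n$ is centered Gaussian with variance $\sigma^2/n$. A Gaussian tail bound combined with a union bound over $j=1,\dots,p$ shows that $\mathbb{P}(\Omega(x_0)) \ge 1 - e^{-x_0}$ precisely because $\lambda_{x_0}/2 = \sigma\sqrt{2(x_0 + \log p)/n}$ is of the right order. This event is the single random ingredient; everything afterwards is deterministic.

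Next, on $\Omega(x_0)$, write $\vdelta = \lasso_{\lambda_{x_0}} - \vbeta^*$, $T = \supp(\vbeta^*)$, $|T|=s$. From the optimality of the Lasso applied with $\vbeta^*$ as a competitor, expanding $\vy = \design\vbeta^* + \vxi$, and using Hölder's inequality to control the stochastic term by $(\lambda_{x_0}/2)\onenorm{\vdelta}$, one gets the inequality
\begin{equation*}
    \tfrac{1}{2n}\euclidnorms{\design\vdelta} + (\lambda_{x_0}/2)\onenorm{\vdelta_{T^c}} \le (3\lambda_{x_0}/2)\onenorm{\vdelta_T}.
\end{equation*}
This yields the cone condition $\onenorm{\vdelta_{T^c}} \le 3\onenorm{\vdelta_T}$, so $RE(s,3)$ applies to $\vdelta$, and then $\onenorm{\vdelta_T} \le \sqrt{s}\,\euclidnorm{\vdelta_T} \le \sqrt{s}\,\euclidnorm{\design\vdelta}/(\sqrt{n}\,\kappa(s,3))$. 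Substituting back and solving the resulting quadratic inequality in $\euclidnorm{\design\vdelta}/\sqrt n$ gives a bound of the form $\scalednorms{\design\vdelta} \lesssim \lambda_{x_0}^2 s / \kappa^2(s,3)$; plugging in $\lambda_{x_0}^2 = 8\sigma^2(x_0+\log p)/n$ yields \eqref{eq:brt-prediction} after tracking constants.

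For the $\ell_0$ bound, I would use the KKT conditions: for every $j\in\supp(\lasso_{\lambda_{x_0}})$ the subgradient condition forces $|(\design^T(\vy - \design\lasso_{\lambda_{x_0}})/n)_j| = \lambda_{x_0}$. Writing $\vy = \design\vbeta^* + \vxi$ and invoking $\Omega(x_0)$ one more time gives $|(\design^T\design\vdelta/n)_j| \ge \lambda_{x_0}/2$ for every such $j$. Summing the squares over the support yields
\begin{equation*}
    (\lambda_{x_0}/2)^2 \zeronorm{\lasso_{\lambda_{x_0}}}
    \le \euclidnorms{\design^T\design\vdelta/n}
    \le \phi_{max}\,\euclidnorms{\design\vdelta}/n,
\end{equation*}
where the last step uses $\|Mv\|_2^2 = v^TM^2v \le \phi_{max}\, v^TMv$ for the PSD matrix $M = \design^T\design/n$. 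Dividing by $(\lambda_{x_0}/2)^2$ and substituting the prediction bound just obtained produces \eqref{eq:brt-ell0}, again up to a numerical factor.

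There is no real obstacle here since the argument is a direct reorganization of the classical proof. The only care needed is to make the two conclusions share the same event $\Omega(x_0)$ (so that no extra probability budget is spent for the sparsity bound), and to keep straight that $\phi_{max}$ enters only through the $\ell_0$ inequality via the step $\|Mv\|_2^2 \le \phi_{max}\|M^{1/2}v\|_2^2$, whereas the prediction bound itself is completely independent of $\phi_{max}$.
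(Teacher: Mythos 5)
Your proposal is correct and follows essentially the same route as the cited source: the paper does not prove \Cref{thm:brt} internally but states it as a reformulation of Theorem~6.2 in \citet{bickel2009simultaneous}, and your reconstruction — a single noise event $\Omega(x_0)$, the basic inequality plus cone condition plus $RE(s,3)$ for the prediction bound, and the KKT stationarity condition combined with the PSD inequality $v^T M^2 v \le \phi_{\max}\, v^T M v$ for the sparsity bound — is exactly the argument used there. Your constant tracking actually yields slightly sharper numerics than the stated $128$ and $64$, which is consistent with (and implies) the claimed bounds.
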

Thus, if the restricted eigenvalue condition
is satisfied, the Lasso estimator with the universal parameter
$\lambda_{x_0} = \sigma \sqrt{8(x_0+\log p)/n}$
enjoys simultaneously
an $\ell_0$ norm of the same order as the true sparsity
(cf. \eqref{eq:brt-ell0}),
and a prediction loss of order $s\log(p)/n$
(cf. \eqref{eq:brt-prediction}).

\Cref{cor:path} below is a direct consequence of \Cref{thm:path}
and the bounds \eqref{eq:brt-ell0}-\eqref{eq:brt-prediction}.

\begin{theorem}
    \label{cor:path}
    Let $n,p$ be positive integers and let $\sigma>0$.
    Let $\vmu\in\Rn$ and $\design$ be any matrix of size $n\times p$.
    Let $\supports$ be any data-driven subset of $\{1,...,p\}$.
    Assume that $\vmu=\design \vbeta^*$ and let $s \coloneqq \zeronorm{\vbeta^*}$.
    Assume that $\vxi\sim\mathcal N(\vzero,\sigma^2 I_{n\times n})$
    and that condition $RE(s,3)$ is satisfied.

    Let $\lambda_0 >...>\lambda_K$
    be the knots of the Lasso path.
    Let $\supports = \{ \supp(\lasso_{\lambda_j}),  j=0,...,K\}$
    be the family of all supports that appear on the Lasso path
    and let $\dassigma$ be the estimator \eqref{eq:def-das} with $\hat\sigma^2 = \sigma^2$.
    Then for all $x>0$, 
    the estimator $\dassigma$
    satisfies
    with probability greater than $1-3\exp(-x)$,
    \begin{equation}
        \label{eq:RE-proba}
        \scalednorms{\dassigma- \design\vbeta^*}
        \le
        \frac{(128 + 48\phi_{max}) \sigma^2 s \log p}{\kappa^2(s,3) n}
        +
        \frac{24\sigma^2}{n}
        +
        \frac{128 \sigma^2 s x}{\kappa^2(s,3) n}
        +
        \frac{22\sigma^2 x}{n}
        .
    \end{equation}
    Furthermore, 
    \begin{equation}
        \label{eq:RE-expectation}
        \E 
        \scalednorms{\dassigma- \design\vbeta^*}
        \le
        \frac{(128 + 48\phi_{max}) \sigma^2 s \log p}{\kappa^2(s,3) n}
        +
        \frac{384 \sigma^2 s}{\kappa^2(s,3) n}
        +
        \frac{90\sigma^2}{n}
        .
    \end{equation}
\end{theorem}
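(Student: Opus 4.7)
The plan is to combine the oracle inequality of \Cref{thm:path} with the classical Lasso guarantees of \Cref{thm:brt}, applied at one well-chosen value of the tuning parameter. Since $\hat\sigma^2 = \sigma^2$, we have $\delta = 0$, so the event in \Cref{thm:path} has probability at least $1 - 2\exp(-x)$.

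First, fix $x > 0$, set $\lambda_x = \sigma\sqrt{8(x + \log p)/n}$, and invoke \Cref{thm:brt} with $x_0 = x$. This provides an event $\Omega(x)$ of probability at least $1 - \exp(-x)$ on which both \eqref{eq:brt-ell0} and \eqref{eq:brt-prediction} hold for $\lasso_{\lambda_x}$. Intersecting $\Omega(x)$ with the event supplied by \Cref{thm:path} gives a single event of probability at least $1 - 3\exp(-x)$ on which \eqref{eq:soi-path} and the two Lasso bounds all hold simultaneously.

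Next, on this intersection, upper bound the minimum in the right-hand side of \eqref{eq:soi-path} by its value at $\lambda = \lambda_x$. The prediction term is directly controlled by \eqref{eq:brt-prediction}, producing the contributions $128\sigma^2 s\log p/(\kappa^2(s,3)n)$ and $128\sigma^2 s x/(\kappa^2(s,3)n)$. For the aggregation penalty $96\sigma^2 \zeronorm{\lasso_{\lambda_x}} \log(ep/(\zeronorm{\lasso_{\lambda_x}}\vee 1))/n$, use the monotonicity of the map $t \mapsto t\log(ep/t)$ on $(0,p]$ together with the sparsity estimate \eqref{eq:brt-ell0} to reduce it to a multiple of $\phi_{max} s \log p/(\kappa^2(s,3) n)$. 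Adding the residual terms $24\sigma^2/n$ and $22\sigma^2 x/n$ from \eqref{eq:soi-path} then yields \eqref{eq:RE-proba}.

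Finally, for the expectation bound \eqref{eq:RE-expectation}, rewrite \eqref{eq:RE-proba} as $\mathbb{P}(Z > A + Bx) \le 3\exp(-x)$, where $A$ collects the $x$-independent terms and $B = 128\sigma^2 s/(\kappa^2(s,3)n) + 22\sigma^2/n$ is the coefficient of $x$. A standard integration of this subexponential tail gives $\E Z \le A + 3B$, and the arithmetic $3 \cdot 128 = 384$ together with $24 + 3 \cdot 22 = 90$ produces the stated constants. The main obstacle I expect is the handling of the aggregation penalty: cleanly bounding $\zeronorm{\lasso_{\lambda_x}} \log(ep/\zeronorm{\lasso_{\lambda_x}})$ by a multiple of $\phi_{max} s \log p/\kappa^2(s,3)$ requires the monotonicity argument applied at the data-dependent sparsity level, and it is exactly the step where the factor $\phi_{max}/\kappa^2(s,3)$ enters the final prediction bound alongside the intrinsic Lasso constant.
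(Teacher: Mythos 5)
Your proof follows essentially the same route as the paper's: specialize \Cref{thm:path} to $\hat\sigma^2 = \sigma^2$ (so $\delta = 0$), invoke \Cref{thm:brt} at $x_0 = x$ to get the event $\Omega(x)$, intersect with $\Omega_{agg}(x)$, evaluate the minimum in \eqref{eq:soi-path} at $\lambda = \lambda_x$, and convert the high-probability bound to an expectation bound by integrating the tail $\mathbb{P}(Z > A + Bx) \le 3e^{-x}$ to get $\E Z \le A + 3B$, which reproduces the arithmetic $3\cdot 128 = 384$ and $24 + 3\cdot 22 = 90$. The only cosmetic difference is the handling of the penalty term: the paper uses the crude inequality $\log(p/(\zeronorm{\lasso_{\lambda_x}}\vee 1)) \le \log p$ together with \eqref{eq:brt-ell0}, while you invoke monotonicity of $t \mapsto t\log(ep/t)$ on $(0,p]$ before plugging in \eqref{eq:brt-ell0}; both variants lead to the same type of bound, and neither requires the extra care you were worried about.

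One small caution for your version: after monotonicity you land on $\tfrac{64\phi_{max}s}{\kappa^2}\log\bigl(\tfrac{ep\kappa^2}{64\phi_{max}s}\bigr)$, and replacing that logarithm by $\log p$ still needs a separate (though easy) bound such as $\log(ep/t) \le \log(ep)$ for $t \ge 1$; the paper's crude inequality sidesteps this by never introducing the data-dependent denominator in the first place. This is a detail of bookkeeping, not a gap in the argument.
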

Using \eqref{eq:oi-supports},
a similar result can be readily obtained for the estimator $\hmucrit$ with different constants.
\begin{proof}[Proof of \Cref{cor:path}] 
    By \Cref{thm:path} with $\delta=0$,
    there is an event $\Omega_{agg}(x)$ of probability greater than
    $1 - 2 e^{-x}$ such that on $\Omega_{agg}(x)$ we have
    \begin{equation}
        \scalednorms{\dassigma- \design\vbeta^*}
        \le
        \scalednorms{\design(\lasso_{\lambda_x}-\vbeta^*)}
        +
        \frac{\sigma^2}{n}
        \left(
            24
            + 96 \zeronorm{\lasso_{\lambda_x}} \log\left(
                \frac{ep}{\zeronorm{\lasso_\lambda} \vee 1}
            \right)
        \right)
        +
        \frac{22 \sigma^2 x}{n}.
    \end{equation}
    Let $\Omega(x)$ be the event defined in \Cref{thm:brt}.
    Using the simple inequality $\log(p/(\zeronorm{\lasso_\lambda} \vee 1))\le \log p$, and the bounds \eqref{eq:brt-ell0}-\eqref{eq:brt-prediction},
    we obtain that \eqref{eq:RE-proba} holds on the event
    $\Omega_{agg}(x)\cap\Omega(x)$.
    By the union bound, the event
    $\Omega_{agg}(x)\cap\Omega(x)$
    has probability greater than $1-3e^{-x}$.
    Finally,  \eqref{eq:RE-expectation} is obtained 
    from \eqref{eq:RE-proba} by integration.
\end{proof}

The procedure studied in \Cref{cor:path}
aggregates the supports along the Lasso path using the procedure \eqref{eq:def-das}.
A similar result holds for the estimator $\hmucrit$ with a leading constant equal to 3.
\Cref{cor:path} has the following implications.

First, if $x>0$ is fixed,
the prediction performance \eqref{eq:RE-proba} of the estimator $\dassigma$
is similar to that of the Lasso with the universal tuning parameter $\lambda_x$,
up to a multiplicative factor that only involves numerical constants and the quantity $\phi_{max}$.
As soon as $\phi_{max}$
(the operator norm of $\design^T\design/n$) is bounded from above by a constant,
the estimator studied in \Cref{cor:path} enjoys the best known prediction guarantees.

Second, \Cref{cor:path} implies that the estimator $\dassigma$ satisfies
the prediction bound \eqref{eq:RE-proba} simultaneously for all confidence levels.
That is, \eqref{eq:RE-proba} holds for all $x>0$ with probability greater than $1-3e^{-x}$,
in contrast with the Lasso estimator with the universal parameter $\lambda_{x_0}$
which depends on a fixed confidence level $1- e^{-x_0}$.
The Lasso estimator with the universal parameter $\lambda_{x_0}$ satisfies
the prediction bound \eqref{eq:brt-prediction} only for the confidence level $1 - e^{-x_0}$,
but to our knowledge it is not known whether 
the Lasso estimator with the universal parameter $\lambda_{x_0}$
satisfies a similar bound 
for different confidence levels than $1-e^{-x_0}$.
In this regard, the estimator studied in \Cref{cor:path} provides a strict improvement 
compared to the Lasso with the universal parameter.

Third, the estimator $\dassigma$ of \Cref{cor:path} satisfies
the bound \eqref{eq:RE-expectation}, that is, a prediction bound in expectation.
Again, to our knowledge, it is not known whether 
the Lasso estimator with the universal parameter satisfies a similar bound in expectation.

Assuming that the bound \eqref{eq:brt-prediction} is tight
and putting computational issues aside,
the prediction performance of the procedure $\dassigma$ of \Cref{cor:path}
is substantially better than the performance of the Lasso with the universal parameter,
as soon as $\phi_{max}$ is bounded from above by a constant.

An upper bound similar to \eqref{eq:brt-ell0}
is given in \cite[Theorem 3 and Remark 3]{belloni2014pivotal}.
Namely, \cite{belloni2014pivotal} prove that
the square-root Lasso estimator with the universal tuning parameter $\hbeta$
satisfies $\zeronorm{\hbeta} \le C s$ with high probability,
where $s$ is the sparsity of the true parameter
and $C$ is a constant that depends
on the sparse eigenvalues of the matrix $\design^T\design/n$,
cf. \cite[Condition P]{belloni2014pivotal}.
This upper bound can be used instead of \eqref{eq:brt-ell0}
to prove results similar to \eqref{eq:RE-proba}
where $\phi_{max}$ is replaced by a smaller constant that depends
on the sparse eigenvalues of $\design^T\design/n$.

\section{Computational complexity of the Lasso path and $\das$}
\label{s:complexity-path}

Computing the estimator $\das$ of \Cref{thm:path} is done in two steps:
\begin{enumerate}[topsep=0pt,itemsep=-1ex,partopsep=1ex,parsep=1ex]
    \item Compute the full Lasso path and 
        let $\supports = \{\supp(\lambda_0),..., \supp(\lambda_K) \}$
        be all the supports that appear on the Lasso path,
        where $\lambda_0,...,\lambda_K$ are the knots of the Lasso path.
    \item Compute $\das$ as a solution of the quadratic program \eqref{eq:def-das},
        where $\supports$ is defined by Step 1.
\end{enumerate}
(We assume that the complexity of computing $\hat \sigma^2$ 
is negligible compared to the complexity of Step 1 and Step 2 above).
The time complexity of Step 2 is the complexity
of a convex quadratic program of size $|\supports |\le K$,
where $K$ is the number of knots on the Lasso path.
Thus, the global cost of computing the estimator $\das$ of \Cref{thm:path}
is polynomial in $K$.

There exist efficient algorithms to compute the entire Lasso path \citep{efron2004least}.
However, 
\cite{mairal2012complexity}
proved that for some values of $\design$ and $\vy$,
the regularization path
of the Lasso
contains more than $3^p/2$ knots.
Hence, for some design matrix $\design$ and some observation $\vy$,
an exact computation of the full Lasso path is not realizable in polynomial time.
In order to fix this computational issue,
\cite{mairal2012complexity} propose an
algorithm that computes an approximate regularization path for the Lasso.
For some fixed $\epsilon>0$,
this algorithm is guaranteed to terminate with less than $O(1/\sqrt{\epsilon})$ knots
and the points on the approximate path have a duality gap smaller than $\epsilon$.
This approximation algorithm can
be used instead of computing the exact Lasso path.
That is, one may compute the estimator $\das$ where $\supports$ is the
collection of supports that appear on the approximate path
computed by the algorithm of \cite{mairal2012complexity}.

Another solution to avoid computational issues is as follows.
Let $M$ be a positive integer.
Instead of computing the Lasso path, one may consider
a grid of tuning parameters $\lambda_1,...,\lambda_M >0$
and aggregate the supports of corresponding Lasso estimates
$\lasso_{\lambda_1},...\lasso_{\lambda_M}$.
The advantage of this approach is twofold.
First, for all $j=1,...,M$
the Lasso estimate $\lasso_{\lambda_j}$
can be computed by standard convex optimization solvers.
Second, the time complexity of the procedure is guaranteed to be polynomial in $M$ and $p$.
For any $x>0$,
by \Cref{cor:nonlinear}, this procedure satisfies,
with probability greater than $1-3e^{-x}$
\begin{equation}
    \scalednorms{\das - \vmu}
    \le
    \min_{j=1,...,M}
    \left(
        \scalednorms{\design\lasso_{\lambda_j}  - \vmu}
        +
        \frac{\hat\sigma^2}{n}
        \left(
            24
            + 96 \zeronorm{\lasso_{\lambda_j}} \log\left(
                \frac{ep}{\zeronorm{\lasso_{\lambda_j}} \vee 1}
            \right)
        \right)
    \right)
    + \frac{22 \sigma^2 x}{n}.
\end{equation}
This oracle inequality is not a strong as \eqref{eq:soi-path}.
However, if at least one of the Lasso estimates $\{\lasso_{\lambda_j}, j=1,...,M\}$
enjoys a small prediction loss and a small $\ell_0$ norm,
then the prediction loss of $\das$ is also small.

\section{A fully data-driven procedure using the Square-Root Lasso}
\label{s:square-root}

This section proposes a fully data-driven procedure, based on the Square-Root Lasso.
The choice of grid comes from the empirical and theoretical observations
that for a correlated design matrix,
there exists a tuning parameter smaller than
the universal parameter which enjoys better prediction performance
than the universal parameter
\citep{van2013lasso,hebiri2013correlations,dalalyan2014prediction}.

\begin{enumerate}[topsep=0pt,itemsep=-1ex,partopsep=1ex,parsep=1ex]
    \item 
        Let $\lambda_{\max} = 2 \sqrt{\log(p/0.01)/n}$ be the universal parameter of the Square-Root Lasso \citep{belloni2014pivotal}
        with confidence level $0.01$.
    \item
        Let $\lambda_{\min}$ be a conservatively small value of the tuning parameter.
    \item Let $M$ be an integer.
    \item Consider the geometric grid $\{\lambda_1,...,\lambda_M\}$ such that 
        \begin{equation}
            \lambda_j = \lambda_{\min} \left(\frac{\lambda_{\max}}{\lambda_{\min}}\right)^{((j-1)/M-1)},
            \qquad
            j=1,...,M.
        \end{equation}
    \item Compute the Square-Root Lasso estimators $\sqlasso_{\lambda_1},...\sqlasso_{\lambda_M}$
        with parameters $\lambda_1,...,\lambda_M$
        (it is possible to perform this computation simultaneously for all $\lambda_1,...,\lambda_M$, cf. \cite{pham2014robust} and the references therein).
    \item Let $\hat F = \{\supp(\sqlasso_{\lambda_j}), j=1,...,M\}$ be the supports of the computed Square-Root Lasso estimators.
    \item Let $\hat \sigma^2$ be the variance estimated by the Square-Root Lasso with the universal parameter $\lambda_{\max}$.
    \item For this choice of $\hat \sigma^2$ and $\hat F$, return the estimator $\das$ or the estimator $\hmucrit$ .
\end{enumerate}
This estimator $\das$ returned by this procedure enjoys the theoretical guarantee
\begin{equation}
    \scalednorms{\das - \vmu}
    \le
    \min_{j=1,...,M}
    \left(
        \scalednorms{\design\sqlasso_{\lambda_j}  - \vmu}
        +\frac{\hat\sigma^2}{n}\left(
            24
            + 96 \zeronorm{\sqlasso_{\lambda_j}} \log\left(
                \frac{ep}{\zeronorm{\sqlasso_{\lambda_j}} \vee 1}
            \right)
        \right)
    \right)
    + \frac{22 \sigma^2 x}{n}
\end{equation}
with probability greater than $1-3e^{-x}$.
A similar guarantee with leading constant 3
can be obtained for the estimator $\hmucrit$ using
\eqref{eq:oi-supports}.

\section{Concluding remarks}

We have presented two procedures \eqref{eq:def-hmucrit} and \eqref{eq:def-das}
that aggregates a data-driven collection of supports $\hat F$.
These procedures satisfy the oracle inequalities given in \Cref{thm:supports} above,
which is the main result of the paper.
\Cref{s:lasso,s:complexity-path} study the situation
where $\hat F$ is the collection of supports that appear along the Lasso path.
These procedures may be used for other data-driven collections $\hat F$ as well.

These procedures allow one to perform a trade-off between
prediction performance and computational cost.
If $\hat F$ contains all the $2^p$ supports, 
these procedures achieve optimal prediction guarantees with
no assumption on the design matrix $\design$, but can not be
realized in polynomial time.
On the other hand, if the cardinality of $\hat F$ is small
(say, polynomial in $n$ and $p$),
then it is possible to compute the estimators \eqref{eq:def-hmucrit} and \eqref{eq:def-das}
in polynomial time.
In view of \eqref{eq:soi-intro-lasso},
one should look for a data-driven set $\hat F$ with the following properties.
\begin{enumerate}[topsep=5pt,itemsep=-0.5ex,partopsep=1ex,parsep=1ex]
    \item 
        The set $\hat F$ is small so that the estimators
        \eqref{eq:def-das} and \eqref{eq:def-hmucrit}
        can be computed rapidly,
    \item The set 
        $\hat F$ contains a support $T$
        such that $|T|$ and $\scalednorms{\pi_T\vmu -\vmu}$ are simultaneously small, 
        so that
        the procedures \eqref{eq:def-das} and \eqref{eq:def-hmucrit}
        enjoy good prediction performance.
\end{enumerate}
A natural choice for $\hat F$ is the collection of supports that appear along the Lasso
path.
This choice of $\hat F$ was studied in \Cref{s:lasso,s:complexity-path}.
Another natural choice is to aggregate the supports of several hard-thresholded Lasso estimators,
since the hard-thresholded Lasso is sign-consistent
under weak conditions on the design \cite[Definition 5 and Corollary 2]{meinshausen2009lasso}.
Further research will investigate other means to construct a data-driven collection $\hat F$
such that the above two properties are satisfied.


\subsection*{Acknoledgements}
We would like to thank Alexandre Tsybakov for helpful comments during the writing of this manuscript.

\appendix

\section{Proof of \Cref{thm:supports}}

For any matrix $A\in\R^{n\times n}$, define the operator norm of $A$ and the Frobenius
norm of $A$ by
\begin{equation}
\opnorm{A} \coloneqq \sup_{\euclidnorms{\vu}=1} \euclidnorm{A\vu},
\qquad
\hsnorm{A} = \sqrt{\Tr(A^TA)},
\end{equation}
respectively.

\begin{proof}[Proof of \eqref{eq:soi-supports}] 
    For all $S,T\subset\{1,...,p\}$, define the event
    \begin{equation}
        \Omega_{S,T}
        = \left\{
                Z(S,T)
                \le 
                4 \sigma^2 |S|
                +
                22\sigma^2 \left(\log \frac 1 {\pi_S\pi_S} + x\right)
            \right\},
    \end{equation}
    where 
    \begin{equation}
        Z(S,T) = 
        2\vxi^T(\Pi_S\vy - \Pi_T\vmu)
        - \frac 1 2 \euclidnorms{\Pi_S\vy -\Pi_T\vy}
        .
        \label{eq:def-Z}
    \end{equation}
    Define the event $\mathcal V \coloneqq \{ \hat\sigma^2 \ge \sigma^2 \}$.
    On the event $\mathcal A \coloneqq \mathcal V \cap ( \cap_{S,T\subset\{1,...,p\}} \Omega_{S,T})$,
    we have simultaneously for all supports $S,T$
    \begin{equation}
            Z(S,T) - 26 \hat\sigma^2 \log \frac 1 {\pi_S} - 22 \sigma^2  \log \frac 1 {\pi_T}
        \le 
            22\sigma^2 x + 4\sigma^2 |S| - 4 \sigma^2 \log \frac 1 {\pi_S}
        \le 22 \sigma^2 x
    \end{equation}
    where we have used 
    that 
    $\log \frac 1 {\pi_S}
    \ge |S|
    $, cf. \eqref{eq:weights-log}.
    By Lemma \ref{lemma:technical-as},
    on the event $\mathcal A$ we have
    \begin{equation}
        \euclidnorms{\das - \vmu}
        \le
        \min_{T \in \supports}
        \left(
            \euclidnorms{\Pi_{T}\vmu  - \vmu}
            + (26\hat\sigma^2 + 22\sigma^2) \log \frac 1 {\pi_T}
        \right)
        + 22 \sigma^2 x.
    \end{equation}
    To obtain \eqref{eq:soi-supports}, we use \eqref{eq:weights-log} and
    the fact that on the event $\mathcal V$,
    $26\hat\sigma^2 + 22\sigma^2\le 48\hat\sigma^2$.

    It remains to bound from below the probability of the event  $\mathcal A$.
    Denote by $\mathcal B^c$ the complement of any event $\mathcal B$.
    We proceed with the union bound as follows,
    \begin{equation}
        \mathbb P (\mathcal A^c)
        \le 
        \mathbb P (\mathcal V^c)
        + \sum_{S,T\subset\{1,...,p\}}
        \mathbb P (\Omega_{S,T}^c).
    \end{equation}
    By definition, $\delta = 
        \mathbb P (\mathcal V^c)$
    and for any $S,T\subset\{1,...,p\}$,
    Lemma \ref{lemma:technical-sigmax} with $t = x + \log \frac 1 {\pi_S\pi_T}$
    yields that  $\mathbb P (\Omega_{S,T}^c) \le \pi_S\pi_T 2 \exp(-x)$.
    As $
    \sum_{S,T\subset\{1,...,p\}}\pi_S \pi_T 
    = (\sum_{S\subset\{1,...,p\}}\pi_S)^2 
    = 1$, 
    we have established that
    \begin{equation}
        \mathbb P (\mathcal A^c)
        \le 
        \delta + 2 \exp(-x).
    \end{equation}
\end{proof}

The proof of \eqref{eq:oi-supports} is close to the argument used in \cite{birge2001gaussian},
cf. \cite[Section 2.3]{giraud2014introduction} for a recent reference on model selection.
The novelty of the present paper is to consider a data-driven collection of estimators.

\begin{proof}[Proof of \eqref{eq:oi-supports}] 
    Let $\hat\Lambda= 18 \hat\sigma^2$
    and let $\hat T = \hat T_{\hat F, \hat\sigma^2}$ for notational simplicity.
    By definition of $\hmucrit=\Pi_{\hat T}\vy$, for all $T\in\hat F$ we have
    $\Crit(\hat T)\le \Crit(T)$ which can be rewritten as
    \begin{align}
        \euclidnorms{\hmucrit - \vmu}
        + \hat \Lambda \log \frac 1 {\pi_{\hat T}}
        &\le
        \euclidnorms{\Pi_T\vy - \vmu}
        + \hat \Lambda \log \frac 1 {\pi_{T}}
        + 2 \vxi^T(\Pi_{\hat T}\vy- \Pi_T\vy),
        \\
        &\le
        \euclidnorms{\Pi_T\vmu - \vmu}
        + \hat \Lambda \log \frac 1 {\pi_{T}}
        + 2 \vxi^T\Pi_{\hat T} \vxi
        +2 \vxi^T (
            \Pi_{\hat T}\vmu 
            - \Pi_T\vmu
        ) 
         - \euclidnorms{\Pi_T\vxi}.
        \label{eq:fjeiwoaa}
    \end{align}
    Define the event $\mathcal V \coloneqq \{ \hat\sigma^2 \ge \sigma^2 \}$.
    For all $S,T\subset\{1,...,p\}$, define
    \begin{align}
    W(S) & = 2\vxi^T \Pi_{S}\vxi - 10 \sigma^2 \log \frac 1 {\pi_{S}},
        \\
    W'(S,T) & = 2\vxi^T(\Pi_S\vmu - \Pi_T\vmu) - 8 \sigma^2 \log \frac 1 {\pi_S\pi_T} - \frac 1 4 \euclidnorms{\Pi_S\vmu - \Pi_T\vmu}.
    \end{align}
    With this notation, using the simple inequality $
     - \euclidnorms{\Pi_T\vxi} \le 0$,
    \eqref{eq:fjeiwoaa} implies that on the event $\mathcal V$,
    \begin{align}
        \euclidnorms{\hmucrit - \vmu}
        &\le
        \euclidnorms{\Pi_T\vmu - \vmu}
        + \hat \Lambda \log \frac 1 {\pi_{T}}
        + 8 \sigma^2 \log \frac 1 {\pi_{T}}
        + W(\hat T) + W'(\hat T,T)
        + \frac 1 4 \euclidnorms{\Pi_{\hat T}\vmu - \Pi_{T}\vmu},
    \end{align}
    Using that $\euclidnorms{\Pi_{\hat T}\vmu - \Pi_{T}\vmu} \le 2\euclidnorms{\Pi_{\hat T}\vmu - \vmu} + 2 \euclidnorms{\vmu  - \Pi_{T}\vmu}$
    and that $\euclidnorms{\Pi_{\hat T}\vmu - \vmu} \le \euclidnorms{\Pi_{\hat T}\vy - \vmu}$,
    we obtain
    \begin{align}
        \frac 1 2 \euclidnorms{\hmucrit - \vmu}
        &\le
        \frac 3 2 \euclidnorms{\Pi_T\vmu - \vmu}
        + \hat \Lambda \log \frac 1 {\pi_{T}}
        + 8 \sigma^2 \log \frac 1 {\pi_{T}}
        + W(\hat T) + W'(\hat T,T).
    \end{align}
    For all $S,T\subset \{1,...,p\}$, define the events
    \begin{equation}
        \Omega_S \coloneqq \{ W(S) \le 6 \sigma^2 x \},
        \qquad
        \Omega_{S,T} \coloneqq \{ W'(S,T) \le 8 \sigma^2 x\}.
    \end{equation}
    On the event $\mathcal V\cap (\cap_{S\subset\{1,...,p\}} \Omega_S) \cap (\cap_{S,T\subset\{1,...,p\}} \Omega_{S,T})$,
    \eqref{eq:oi-supports} holds.
    It remains to bound from below the probability of this event.

    For any fixed $S\subset\{1,...,p\}$,
    using \eqref{eq:weights-log} and \eqref{eq:hsu-consequence} with $t = x+ \log \frac 1 {\pi_S}$ 
    we have $\mathbb P(\Omega_S^c) \le \pi_S e^{-x}$.

    Let $S,T\subset\{1,...,p\}$ be fixed.
By using \eqref{eq:chernoff} with $\vv = 2( \Pi_S\vmu - \Pi_T\vmu))$ and $t=x + \log \frac 1 {\pi_S\pi_T}$,
    we have that on an event of probability greater than $1-\pi_S\pi_Te^{-x}$,
    \begin{equation}
        2\vxi^T(\Pi_S\vmu - \Pi_T\vmu)  \le
        2 \sigma \sqrt{2(x+\log(1/ {\pi_S\pi_T}))} \euclidnorm{\Pi_S\vmu - \Pi_T\vmu}
        \le 
        8\sigma^2 \left(x+\log\frac 1 {\pi_S\pi_T}\right)
        + \frac 1 4 \euclidnorms{\Pi_S\vmu - \Pi_T\vmu}.
    \end{equation}
    Thus, $\mathbb P(\Omega_{S,T}^c) \le \pi_S\pi_Te^{-x}$.

    As in the proof of \eqref{eq:soi-supports}, the union bound completes the proof.
\end{proof}

\section{Technical Lemmas}

\begin{lemma}
    \label{lemma:technical-as}
    For any estimator $\hat\sigma^2$,
    let $\vthat$ be a minimizer of
    \eqref{eq:def-H}.
    Then, almost surely,
    \begin{align}
        \euclidnorms{\hmu_\vthat -\vmu}
        \le 
        \min_{\jstar=1,...,\M}
        \left(
            \euclidnorms{\Pi_{\T_\jstar}\vmu -\vmu}
            + (26 \hat\sigma^2 + 22\sigma^2) \log\frac{1}{\pi_{\T_\jstar}}
        \right)
        + W,
        \label{eq:technical-soi}
    \end{align}
    where
    \begin{equation}
        W \coloneqq
        \max_{S,T\in\supports} 
        \left(
            Z(S, T)
            - 26 \hat\sigma^2 \log \frac 1 {\pi_S}
            - 22 \sigma^2 \log \frac 1 {\pi_T}
        \right)
    \end{equation}
    and $Z(\cdot,\cdot)$ is defined in \eqref{eq:def-Z}.
\end{lemma}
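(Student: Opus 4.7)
The plan is to combine first-order optimality of $\vthat$ with the fact that $\supportsHn$ is an exact quadratic function of $\vtheta$, which produces a strong-convexity-type descent inequality carrying an extra term $\tfrac{1}{2}\euclidnorms{\hmu_\jstar - \hmu_\vthat}$. This extra term is precisely what is needed to absorb the otherwise-problematic quantity that arises when the noise inner product $2\vxi^T(\hmu_\vthat - \hmu_\jstar)$ is rewritten via $Z(\cdot,\cdot)$.

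Starting from the identity $\mathrm{pen}_Q(\vtheta) = \sumj \theta_j \euclidnorms{\hmu_j} - \euclidnorms{\hmu_\vtheta}$, one sees that $\supportsHn(\vtheta)$ is a quadratic function of $\vtheta$ with Hessian $M^T M$, where $M\in\R^{n\times\M}$ has columns $\hmu_1,\dots,\hmu_\M$. The Taylor expansion therefore terminates exactly, and combined with the first-order optimality of $\vthat$ on $\simplex$ it yields, for every $\jstar \in \{1,\dots,\M\}$,
\begin{equation*}
\supportsHn(\vthat) + \tfrac{1}{2}\euclidnorms{\hmu_\jstar - \hmu_\vthat} \le \supportsHn(\ve_\jstar).
\end{equation*}

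I would then replace each $\euclidnorms{\vu - \vy}$ by $\euclidnorms{\vu - \vmu} - 2\vxi^T(\vu - \vmu) + \euclidnorms{\vxi}$; after cancellation of the common $\euclidnorms{\vxi}$ and rearrangement of the noise terms, the inequality becomes
\begin{equation*}
\euclidnorms{\hmu_\vthat - \vmu} + \tfrac{1}{2}\mathrm{pen}_Q(\vthat) + \tfrac{1}{2}\euclidnorms{\hmu_\vthat - \hmu_\jstar}
\le
\euclidnorms{\hmu_\jstar - \vmu} + 2\vxi^T(\hmu_\vthat - \hmu_\jstar)
- 26\hat\sigma^2 \sumj \hat\theta_j \log(1/\pi_{\T_j}) + 26\hat\sigma^2 \log(1/\pi_{\T_\jstar}).
\end{equation*}
The bridge to $Z$ is the identity
\begin{equation*}
2\vxi^T(\hmu_j - \hmu_\jstar) = Z(\T_j, \T_\jstar) + \tfrac{1}{2}\euclidnorms{\hmu_j - \hmu_\jstar} - 2\euclidnorms{\Pi_{\T_\jstar}\vxi},
\end{equation*}
immediate from the definition of $Z$ together with $\Pi_{\T_\jstar}\vy = \Pi_{\T_\jstar}\vmu + \Pi_{\T_\jstar}\vxi$ and $\vxi^T\Pi_{\T_\jstar}\vxi = \euclidnorms{\Pi_{\T_\jstar}\vxi}$. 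Averaging against $\hat\theta_j$ and applying the convex-combination identity $\sumj \hat\theta_j \euclidnorms{\hmu_j - \hmu_\jstar} = \mathrm{pen}_Q(\vthat) + \euclidnorms{\hmu_\vthat - \hmu_\jstar}$ makes $\tfrac{1}{2}\mathrm{pen}_Q(\vthat)$ and $\tfrac{1}{2}\euclidnorms{\hmu_\vthat - \hmu_\jstar}$ cancel on both sides.

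To close, I would split $\euclidnorms{\hmu_\jstar - \vmu} = \euclidnorms{\Pi_{\T_\jstar}\vmu - \vmu} + \euclidnorms{\Pi_{\T_\jstar}\vxi}$, observe that the aggregated coefficient of $\euclidnorms{\Pi_{\T_\jstar}\vxi}$ is nonpositive so this contribution may be dropped, and bound $\sumj \hat\theta_j Z(\T_j, \T_\jstar) \le W + 26\hat\sigma^2 \sumj \hat\theta_j \log(1/\pi_{\T_j}) + 22\sigma^2 \log(1/\pi_{\T_\jstar})$ termwise from the definition of $W$. The two $\sumj \hat\theta_j \log(1/\pi_{\T_j})$ contributions cancel, the total coefficient in front of $\log(1/\pi_{\T_\jstar})$ becomes $26\hat\sigma^2 + 22\sigma^2$, and taking the minimum over $\jstar$ delivers the claim. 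The main obstacle is the very first step: without the extra $\tfrac{1}{2}\euclidnorms{\hmu_\jstar - \hmu_\vthat}$ arising from the exact quadratic Taylor expansion of $\supportsHn$, the matching $\tfrac{1}{2}\euclidnorms{\hmu_\vthat - \hmu_\jstar}$ produced when rewriting $2\vxi^T(\hmu_\vthat - \hmu_\jstar)$ via $Z$ would remain uncanceled and the proof would break down without an additional assumption on $\design$; the coefficient $\tfrac{1}{2}$ in front of $\mathrm{pen}_Q$ is precisely calibrated for this cancellation.
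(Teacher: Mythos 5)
Your proposal is correct and follows the same route as the paper's proof, with the algebra organized differently. The paper applies the first-order optimality condition $\nabla \supportsHn(\vthat)^T(\ve_\jstar - \vthat)\ge 0$ after explicitly computing the gradient of the (rewritten quadratic) objective, while you restate the same condition in the "integrated" form $\supportsHn(\vthat) + \tfrac12\euclidnorms{\hmu_\jstar - \hmu_\vthat} \le \supportsHn(\ve_\jstar)$ via the exact second-order Taylor expansion; the two are equivalent because $\supportsHn$ is an exact quadratic. You then reach the bridge to $Z$ via the convex-combination identity $\sumj \hat\theta_j \euclidnorms{\hmu_j - \hmu_\jstar} = \pen{\vthat} + \euclidnorms{\hmu_\vthat - \hmu_\jstar}$ and the Pythagorean decomposition $\euclidnorms{\hmu_\jstar - \vmu} = \euclidnorms{\Pi_{\T_\jstar}\vmu - \vmu} + \euclidnorms{\Pi_{\T_\jstar}\vxi}$, whereas the paper reaches the same point using the LS projection inequality $\euclidnorms{\hmu_\jstar - \vy}\le \euclidnorms{\Pi_{\T_\jstar}\vmu - \vy}$ and merging the $\vxi^T$ terms directly inside the gradient expression. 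Both arrive at $\sumj\hat\theta_j\big(Z(\T_j,\T_\jstar) - 26\hat\sigma^2\log\frac{1}{\pi_{\T_j}} - 22\sigma^2\log\frac{1}{\pi_{\T_\jstar}}\big) \le W$ and conclude by the termwise simplex bound. A minor merit of your presentation is that it makes explicit that the coefficient $\tfrac12$ in front of $\pen{\cdot}$ is exactly what is needed for the cancellation against the strong-convexity bonus; the paper leaves this calibration implicit inside the gradient computation.
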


\begin{proof}[Proof of Lemma \ref{lemma:technical-as}]
    Let $\hat \Lambda = 26 \hat\sigma^2$.
    The function $\supportsHn$ is convex and differentiable,
    it can be rewritten as
    \begin{equation}
        \forall \vtheta\in\simplex,\; \supportsHn(\vtheta)  = \frac{1}{2} \euclidnorms{\hmu_\vtheta} 
        + \euclidnorms{\vy}
        + \sumj \theta_j 
        \left(
             - 2 \vy^T \hmu_j
             + \frac{1}{2} \euclidnorms{\hmu_j} + \hat\Lambda \log\frac{1}{\pi_{\T_j}}
        \right).
    \end{equation}
    By simple algebra, for any $\vthetaprime\in\RM$,
    \begin{align}
        &\nabla \supportsHn(\vthat)^T \vthetaprime  = \hmu_\vthat^T \hmu_\vthetaprime + 
        \sumj \thetaprime_j
        \left(
             - 2 \vy^T \hmu_j  
             + \frac{1}{2} \euclidnorms{\hmu_j} + \hat\Lambda \log\frac{1}{\pi_{\T_j}}
         \right)
         ,  \label{eq:proof-thetaprime} \\
         &\nabla \supportsHn(\vthat)^T (- \vthat)   = - \euclidnorms{\hmu_\vthat - \vmu} + \euclidnorms{\vmu} +
         \sumj \that_j
        \left(
             2 \vxi^T \hmu_j 
             - \frac{1}{2} \euclidnorms{\hmu_j} - \hat\Lambda \log\frac{1}{\pi_{\T_j}}
         \right),
    \end{align}
    By summing the last 
    display
    and 
    equality \eqref{eq:proof-thetaprime} applied to $\vthetaprime = \ve_\jstar$, we get
    \begin{multline}
        \nabla \supportsHn(\vthat)^T (\ve_\jstar - \vthat)
        = - \euclidnorms{\hmu_\vthat - \vmu} + \euclidnorms{\hmu_\jstar - \vmu}
        + \hat\Lambda \log \frac 1 {\pi_{\hat T_\jstar}}
        \\
        + \sum_{j=1}^\M
        \that_j
        \Bigg[
            2 \vxi^T (\hmu_j-\hmu_\jstar) 
            - \frac{1}{2} \euclidnorms{\hmu_j - \hmu_\jstar}
            - \hat\Lambda\log\frac 1 {\pi_{\T_j}}
            \Bigg].
    \end{multline}
    Since $\hmu_\jstar=\Pi_{\hat T_\jstar}\vy$ is a Least Squares estimator
    over the linear span of the covariates in $\hat T_\jstar$, we have
    $\euclidnorms{\hmu_\jstar - \vy}\le \euclidnorms{\Pi_{\hat T_\jstar}\vmu -\vy}$ which can be rewritten as
    \begin{equation}
        \euclidnorms{\hmu_\jstar - \vmu}
        \le
        \euclidnorms{\Pi_{\hat T_\jstar}\vmu - \vmu}
        + 2 \vxi^T(\hmu_\jstar - \Pi_{\hat T_\jstar}\vmu).
    \end{equation}
    We thus have
    \begin{multline}
        \nabla \supportsHn(\vthat)^T (\ve_\jstar - \vthat)
        \le - \euclidnorms{\hmu_\vthat - \vmu} + \euclidnorms{\Pi_{\hat T_\jstar}\vmu - \vmu}
        + (\hat\Lambda + 22\sigma^2) \log \frac 1 {\pi_{\hat T_\jstar}}
        \\
        + \sum_{j=1}^\M
        \that_j
        \Bigg[
            2 \vxi^T (\hmu_j-\Pi_{\hat T_\jstar}\vmu) 
            - \frac{1}{2} \euclidnorms{\hmu_j - \hmu_\jstar}
            - \hat\Lambda\log\frac 1 {\pi_{\T_j}}
            - 22\sigma^2 \log\frac 1 {\pi_{\T_\jstar}}
            \Bigg].
    \end{multline}
    For all $\jstar = 1,...,\M$,
    \cite[Section 4.2.3]{boyd2009convex} yields
    $\nabla \supportsHn(\vthat)^T (\ve_\jstar - \vthat) \ge 0$.
    Furthermore, a linear function over the simplex is maximized at a vertex,
    so 
    almost surely we obtain
    \eqref{eq:technical-soi}.
\end{proof}

\begin{lemma}
    \label{lemma:technical-sigmax}
    Let $t>0$.
    For any supports $S,T\subset \{1,...,p\}$,
    the quantity $Z(S,T)$ defined in \eqref{eq:def-Z}
    satisfies
    with probability greater than $1-2\exp(-t)$,
    \begin{equation}
        Z(S,T) 
        \le 4 \sigma^2 |S| +
        22 \sigma^2 t.
    \end{equation}
\end{lemma}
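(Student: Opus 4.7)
The plan is to decompose $Z(S,T)$ into three pieces --- a deterministic quadratic in $\vu \coloneqq (\Pi_S-\Pi_T)\vmu$, a linear-in-$\vxi$ term whose coefficient is a deterministic vector, and a quadratic-in-$\vxi$ term --- then control the linear piece via the Chernoff bound for subgaussian vectors and the quadratic piece via the Hsu--Hanson inequality for quadratic forms of subgaussian vectors (the inequality \eqref{eq:hsu-consequence} already invoked in the proof of \eqref{eq:oi-supports}).

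First I would substitute $\vy = \vmu + \vxi$ to rewrite $\Pi_S \vy - \Pi_T\vmu = \vu + \Pi_S\vxi$ and $\Pi_S\vy - \Pi_T\vy = \vu + P\vxi$, where $P \coloneqq \Pi_S - \Pi_T$. Expanding the square gives
\begin{equation*}
    Z(S,T) = \vxi^T(2I_n - P)\vu \;+\; \Bigl[\, 2\vxi^T\Pi_S\vxi - \tfrac{1}{2}\euclidnorms{P\vxi}\,\Bigr] \;-\; \tfrac{1}{2}\euclidnorms{\vu}.
\end{equation*}
The coefficient $(2I_n - P)\vu$ is a deterministic vector. Since $\Pi_S$ and $\Pi_T$ are orthogonal projections, the eigenvalues of $P$ lie in $[-1,1]$, so $\opnorm{2I_n - P} \le 3$ and $\euclidnorm{(2I_n - P)\vu} \le 3\euclidnorm{\vu}$. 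By the subgaussian Chernoff bound, with probability at least $1 - e^{-t}$,
\begin{equation*}
    \vxi^T(2I_n - P)\vu \;\le\; 3\sigma\euclidnorm{\vu}\sqrt{2t}.
\end{equation*}
Combined with the deterministic term, the elementary inequality $cx - \tfrac{1}{2}x^2 \le c^2/2$ applied with $x = \euclidnorm{\vu}$ and $c = 3\sigma\sqrt{2t}$ yields $\vxi^T(2I_n-P)\vu - \tfrac{1}{2}\euclidnorms{\vu} \le 9\sigma^2 t$.

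For the quadratic piece, the key observation is that the negative term $-\tfrac{1}{2}\euclidnorms{P\vxi}$ is nonpositive and can simply be dropped, leaving $2\vxi^T\Pi_S\vxi$. Applying the Hsu/Hanson--Wright bound to the projection $\Pi_S$, which has rank $|S|$, trace $|S|$, squared Frobenius norm $|S|$, and operator norm $1$, gives $\vxi^T \Pi_S\vxi \le \sigma^2(|S| + 2\sqrt{|S|t} + 2t) \le \sigma^2(2|S| + 3t)$ with probability at least $1-e^{-t}$, using $2\sqrt{|S|t} \le |S| + t$. Thus $2\vxi^T\Pi_S\vxi \le 4\sigma^2|S| + 6\sigma^2 t$. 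A union bound over the two events combines the pieces to yield $Z(S,T) \le 4\sigma^2|S| + 15\sigma^2 t$ with probability at least $1 - 2 e^{-t}$, which is stronger than the claimed bound since $15 \le 22$.

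The main obstacle I anticipate is picking the right decomposition: if instead one bounds $2\vxi^T(\Pi_S\vy - \Pi_T\vmu)$ directly, the linear term $2\vxi^T\vu$ cannot be absorbed without using the cancellation from $\tfrac{1}{2}\euclidnorms{\vu + P\vxi}$, and only the specific grouping above makes the coefficient of the surviving linear term deterministic with operator norm controlled by a universal constant. The other subtlety is verifying that the cross term $\vu^T P\vxi$ coming from expanding the square merges cleanly with $2\vxi^T\vu$ to give $\vxi^T(2I_n - P)\vu$, and that discarding $-\tfrac{1}{2}\euclidnorms{P\vxi}$ does not cost more than the $15\sigma^2 t$ slack already present.
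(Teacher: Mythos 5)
Your proof is correct and follows essentially the same route as the paper: decompose $Z(S,T)$ with $P=\Pi_S-\Pi_T$ and $\vu=P\vmu$, drop the nonpositive $-\tfrac12\euclidnorms{P\vxi}$, control $\vxi^T(2I_n-P)\vu$ via the subgaussian Chernoff bound and absorb it into $-\tfrac12\euclidnorms{\vu}$, and control $2\vxi^T\Pi_S\vxi$ via the Hsu--Wright bound. The only difference is that you use the sharper spectral bound $\opnorm{P}\le 1$ for a difference of orthogonal projections (so $\opnorm{2I_n-P}\le 3$) while the paper uses the cruder $\opnorm{P}\le\opnorm{\Pi_S}+\opnorm{\Pi_T}\le 2$ (giving $\opnorm{2I_n-P}\le 4$), which is why you land on the slightly better constant $15$ in place of $22$.
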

\begin{proof}[Proof of Lemma \ref{lemma:technical-sigmax}] 
    Let $D=\Pi_S-\Pi_T$. Then almost surely,
    \begin{equation}
        Z(S,T)
        = 2 \vxi^T \Pi_S\vxi
        + \vxi^T (2 D\vmu - D^2\vmu)
        - \frac 1 2 \euclidnorms{D\vmu}
        - \frac 1 2 \euclidnorms{D\vxi}.
    \end{equation}
    It is clear that  $- \euclidnorms{D\vxi}\le 0$.
    As $\vxi$ satisfies \eqref{assum:noise},
    a Chernoff bound yields that for all $\vv\in\R^n$,
    \begin{equation}
        \label{eq:chernoff}
        \mathbb P
        \left(\vxi^T \vv > \sigma \euclidnorm{\vv} \sqrt{2t}\right) \le \exp(-t).
    \end{equation}
    It is clear that $\opnorm{D}\le 2$.
    We apply this concentration inequality to $\vv = 2D\vmu-D^2\vmu$ to get
    that  with probability greater than $1-\exp(-t)$,
    \begin{align}
        \vxi^T (2 D\vmu - D^2\vmu)
        \le
        \sigma \euclidnorm{2D\vmu - D^2\vmu}
        \sqrt{2t} 
        & \le
        \sigma \opnorm{2I_n - D} \euclidnorm{D\vmu}
        \sqrt{2t}, \\
        &\le
        \sigma 4 \euclidnorm{D\vmu}
        \sqrt{2t}
        \le 
        16 \sigma^2 t
        + \frac 1 2 \euclidnorms{D\vmu}
        .
    \end{align}
    Finally,
    let $r\le |S|$ be the rank of $\Pi_S$.
    The matrix $\Pi_S$ is an orthogonal projector.
    Hence
    $\hsnorm{\Pi_S}^2 = r$ and $\opnorm{\Pi_S} \le 1$,
    so that applying the concentration inequality from 
    \cite{hsu2012tail} yields
    that with probability greater than $1-\exp(-t)$,
    \begin{equation}
        \label{eq:hsu-consequence}
        2 \vxi^T \Pi_S\vxi
        \le 2\sigma^2(
            r
            + 2 \sqrt{r t}
            + 2 t
        )
        \le 4 \sigma^2 r + 6 \sigma^2 t
        \le 4 \sigma^2 |S| + 6 \sigma^2 t.
    \end{equation}
    A union bound completes the proof.
\end{proof}

\bibliographystyle{plainnat}
\bibliography{db}

\end{document}